\newtheorem{theorem}{Theorem}[section]
\newtheorem{proposition}{Proposition}[section]
\newtheorem{definition}{Definition}[section]
\newtheorem{lemma}{Lemma}[section]
\newtheorem{remark}{Remark}[section]
\newtheorem{corollary}{Corollary}[section]
\numberwithin{equation}{section} 
\newcommand\supp{\mathop{\rm supp}}
\def\O{\Omega}
\def\ds{\displaystyle}
\def\set{/\kern-.51em o}
\def\eq{\mathop{\vrule height2,6pt depth-2,3pt
         width -1pt\kern 0pt =}}
\def\cT{ {\cal T}}
\def\Y{ {\cal Y}}
\def\e{\delta}
\def\ed{\delta}
\def\ve{\varepsilon}
\def\d{\delta}
\def\O{\Omega}
\def\CO{ {\cal O}}
\def\div{{\text{div}}}
\def\ed{\delta}
\def\p{\partial}
\def\R{{\mathbb{R}}}
\def\Z{{\mathbb{Z}}}
\def\N{{\mathbb{N}}}
\def\H{{\mathbb{H}}}
\def\C{{\mathbb{C}}}
\def\Ge{{\bf e}}
\def\ov{\overline}
\begin{document}
\begin{center}
{\bf  \large Homogenization of Helmholtz equation in a periodic layer to study Faraday cage-like shielding effects}
\end{center} 
  \centerline{S Aiyappan\footnote{Department of Mathematics, Indian Institute of Technology Hyderabad, Kandi, Telangana, India 502285.\\ Email: aiyappan@math.iith.ac.in}, Georges Griso\footnote{Sorbonne Universit\'e, CNRS, Universit\'e de Paris, Laboratoire Jacques-Louis Lions (LJLL), F-75005 Paris, France.\\ Email:  griso@ljll.math.upmc.fr}, and Julia Orlik\footnote{Department SMS, Fraunhofer ITWM, 1 Fraunhofer Platz, 67663 Kaiserslautern, Germany.\\ Email: julia.orlik@itwm.fraunhofer.de}}

\begin{abstract}

The work is motivated by the Faraday cage effect. We consider the Helmholtz equation over a 3D-domain containing a thin heterogeneous interface of thickness $\ed \ll 1$. The layer has a $\ed-$periodic structure in the in-plane directions and is cylindrical in the third direction. The periodic layer has one connected component and a collection of isolated regions. The isolated region in the thin layer represents air or liquid, and the connected component represents a solid metal grid with a $\ed$ thickness. The main issue is created by the contrast of the coefficients in the air and in the grid and that the zero-order term has a complex-valued coefficient in the connected faze while a real-valued in the complement. An asymptotic analysis with respect to $\ed \to 0$ is provided, and the limit Helmholtz problem is obtained with the Dirichlet condition on the interface. The periodic unfolding method is used to find the limit.

\end{abstract}
{\bf Keywords:} Homogenization; Helmholtz equations; Periodic unfolding; Thin structure \\[2mm]
\noindent {\bf Mathematics Subject Classification (2010):} 35B27, 35J50, 35J05, 74K10
\section{Introduction}
	The work is motivated by a design of shielding textile material, that is, to design the periodic distance between yarns in the grid and the fiber thickness so that the material would act as a shield on a particular frequency. Therefore, in the appendix, we provide the explicit dependency of all the constants on geometric parameters.	The main modeling issue here is the chosen contrast in the coefficients of the grid compared to the surrounding air or fluid. It is chosen as an order of $\ed^{-2}$, which leads to the complete shielding (zero Dirichlet boundary condition on the interface), while $\ed^{-1}$ leads to a partial shielding and depends on the grid design. The first case is focused in this article while the later case will be handled in another paper.\\
	
	In this work, we consider the Helmholtz equation for two domains separated by a thin heterogeneous layer of the thickness $\ed \ll 1$.  The layer has an $\ed-$ periodic structure in plane directions and is cylindrical with respect to the third direction, i.e., the in-plane structure is the same in all cross-sections. Two balks are connected by one of the components and another component is connected in the layer across the periodicity cells. The isolated region should represent an air or liquid and the connected plane grid with a thickness $\ed$ a solid, maybe metal. The first main issue is that the zero order term has a complex-valued coefficient in the connected faze (the grid) and a real-valued in the isolated regions and in the the bulk. The second issue is the contrast in the imaginary part of the zero-order-term-coefficient in the solid (may be metal), which relates as $\ed^{-2}$ to all other coefficients.\\
	
	There is a huge literature on shielding problems. One can refer to \cite{agrawal,acous_surf}  concerning the acoustic wave propagation and the Maxwell equations have been extensively studied in \cite {Deloume-2020-CRMATH,LiWuZh-2011,Bouchitte-2009,Bouchitte-2010,Cesse-book,Kirsch-book,Monk-2003-book,Ghosh-2018}. \\
	
There exists a large number of papers devoted to the problems with thin layers of different structure. Depending on the relation between small parameters involved in geometry and stiffness of the layers, different limit problems can be obtained. In particular, \cite{cior} deals with the Neumann sieves of different thickness and sizes of inclusions. The articles \cite{bes09, bes10} consider the case of a thin stiff layer. A case of a soft homogeneous layer is discussed in \cite{geym,GMO2}. An interface problem with contrasting coefficients has been analysed in \cite{Radu-2007}.\\

For the study of the limiting behaviour we use the periodic unfolding method, which was first introduced in \cite{cior2}, later developed in \cite{book}. This method was used for different types of problems, particularly,  problems for the thin layers in \cite{ cior} and contact problems in the thin layer \cite{GMO2}. \\
  
A regularization for the imaginary coefficient in front of the zero-order term was introduced and a uniform convergence with respect to this regularizing coefficient was proven. That is, we start with the regularized problem, show its convergence to the initial one, then pass to the limit in the regularized problem and then pass to the limit with the regularized parameter. Similar technique was used in \cite{GO} to regularize the contact problem with Coulomb's friction.\\

The geometrical setting is similar to the one from \cite{GrMigOrl-2017}, just in the complement to the domain the contrast in coefficients is considered. \\

The paper is organized in the following way. Section 2 provides the geometric setting and preliminary estimates of the solution. The wellposedness of the original problem and the convergence are studied in Section 3. Section 4 investigates the asymptotic analysis of the problem.  Finally, the exact constants are given in the Appendix, those are expressed in terms of physical known constants, size of the domain, frequency and the source term. Those are important to design a shield with a particular frequency.

\section{Geometrical setting and problem description }\label{Dd}
This section is devoted to describe the geometric structure of the domain and introduce the problem under consideration. In the Euclidean space $\mathbb R^2$ consider a  domain $\CO$ with a  $C^{1,1}$ boundary and let $L > 0$ be a fixed real number. \\
Define
\begin{equation}
\label{doms}
\begin{array}{ll}
 \Omega_{\d}^+&= \CO \times (\d/2, L),\\
  \Omega_\d&= \CO \times (-\d/2,\d/2),\\
   \Omega_{\d}^-&= \CO \times (-L, -\d/2),
 \end{array}
\end{equation}
and 
 \begin{align*}
\Gamma &= \CO \times\{0 \}.
\end{align*}

\begin{figure}[htb]
	\centering
	\includegraphics[width =0.6\textwidth]{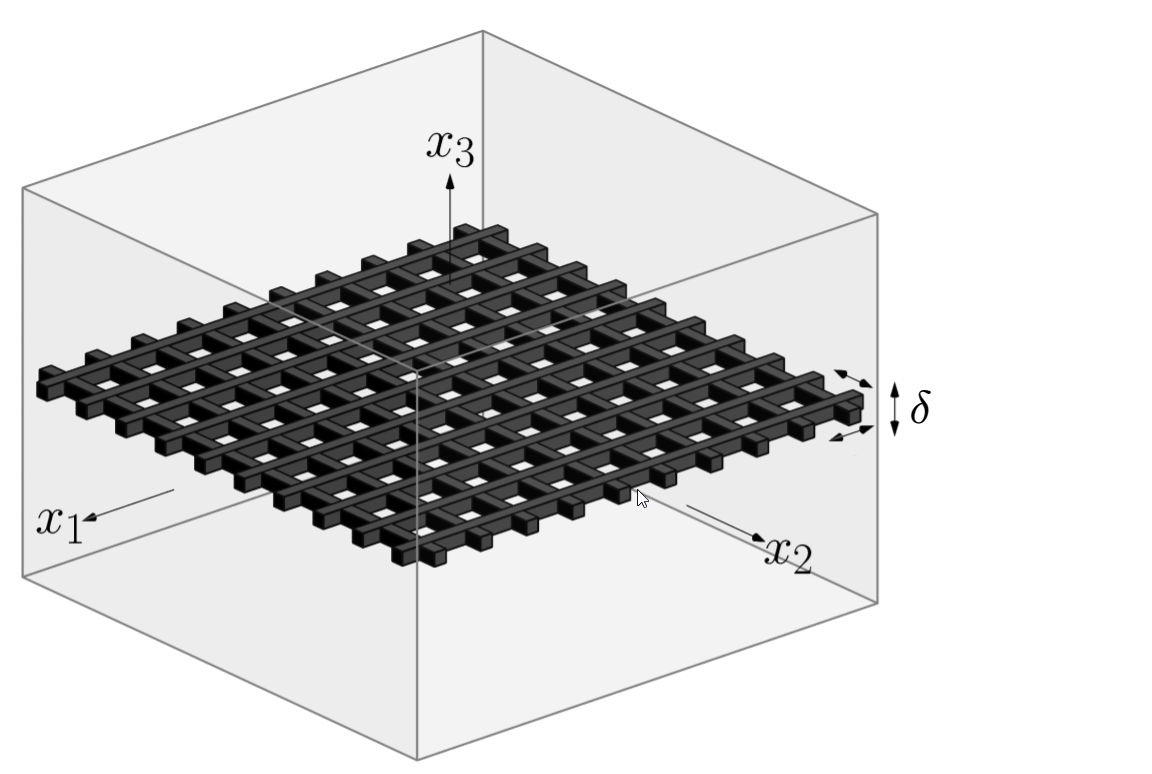}
	\caption{3D-domain $\Omega$}
    \label{Image-1}
    \end{figure}
\begin{figure}[htb]
	\centering{
	\includegraphics[width =0.48 \textwidth]{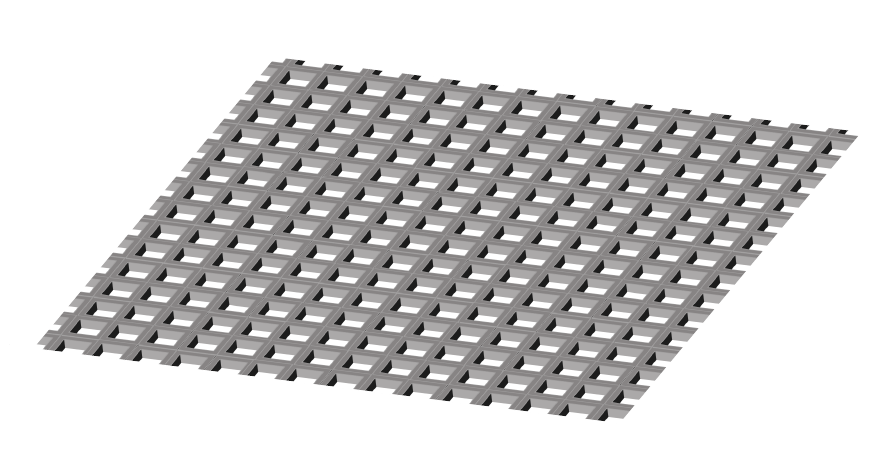}
	\includegraphics[width =0.48 \textwidth]{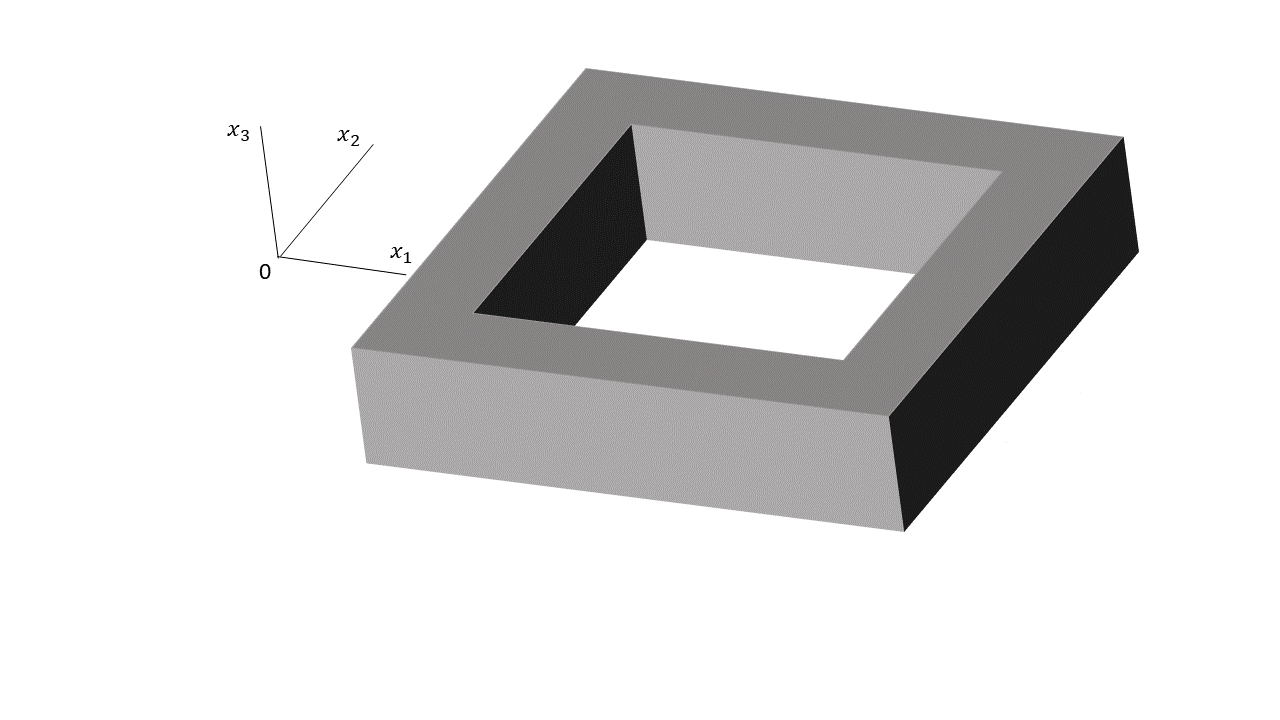}
\label{Image-2}
}
	\caption{Grid structure and the reference cell.}
\end{figure}

Now, let us describe the thin layer. A model picture is given in Fig. \ref{Image-1} and \ref{Image-2}. Here $\d$ is a small parameter corresponding to the thickness of the layer and also the periodicity parameter in $x_1$ and $x_2$ directions. 

The layer $\Omega_{\d}$ has a periodic in-plane structure. The unit cell $Y$ is given by
\begin{equation*}
Y \doteq \big( 0, 1 \big)^2 \times (-1/2,1/2)\subset \R^3.
\end{equation*}
Let $Y_0$ and $Y_1$ are two open subsets of $Y$.  The set $Y_0$, as shown for example in Fig. \ref{Image-2}, is an open set  with Lipschitz boundary satisfying $Y_0\subset Y$, it will represent the periodic "grid" and its complement $Y_1 =Y \setminus \overline{Y_0}\not=\emptyset $ represents the air or material with less conductivity. \\

By scaling and translating $Y_0$ in $x_1$ and $x_2$ direction, we get the thin grid  $\Omega_{\ed}^i$ as follows
\begin{align}
\Xi_\d&=\big\{(\xi_1,\xi_2)\in \Z^2\;|\; \d\big(\xi_1\Ge_1+\xi_2\Ge_2+Y\big)\subset \O\big\},\\
	\Omega_{\ed}^i &= \hbox{Interior}\bigcup_{\xi\in\Xi_\d}\d\big(\xi_1\Ge_1+\xi_2\Ge_2+\overline{Y_0}\big),\\
\widehat{\CO}_{\ed} &= \hbox{Interior}\bigcup_{\xi\in\Xi_\d} \d \big(\xi_1\Ge_1+\xi_2\Ge_2+[0,1]^2 \big),\\
\Lambda_{\ed} &= \CO \setminus \widehat{\CO}_{\ed}
\end{align}
where $\Ge_1$ and $\Ge_2$ are the canonical vectors $\Ge_1=(1,0,0)$ and $\Ge_2=(0,1,0)$. 
The grid/wire structure $\Omega_{\ed}^i$ is made up of a conducting material and holes between the grid, $\Omega_{\e}^*$ is defined as
\begin{align*}
\Omega_\d^* = \Omega_{\d} \setminus \overline{\Omega_\ed^i}.
 \end{align*}

\subsection{ A preliminary result}
Denote $H$ and  $L$ two Hilbert spaces satisfying $H\subset L$.  Below, we give a lemma   with the exact computation of the constant.
	\begin{lemma}\label{lem:elliptic}\cite{Sebelin-1997} Let $a\;:\; H \times H\to \C $ be a continuous sesquilinear form  satisfying 
	\begin{enumerate}
		\item $|\Im(a(u,u))| \geq k_1 \|u\|_{L}^2$ for all $u \in H$ for some $k_1 >0$, 
		\item $|\Re (a(u,u))| \geq k_2 \|u\|_{H}^2 - k_3 \|u\|_{L}^2$ for all $u \in H$ for  some $k_2,k_3 >0$.\ \  
	\end{enumerate}
	\end{lemma}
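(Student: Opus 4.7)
The plan is to derive from the two hypotheses a single modulus-coercivity estimate of the form $|a(u,u)| \ge C \|u\|_H^2$, and then obtain well-posedness of the associated variational problem via a sesquilinear Banach-Necas-Babuska argument. The explicit dependence of $C$ on $k_1, k_2, k_3$ (important for the appendix quoted in the introduction) will drop out of the combination of inequalities.

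First I would eliminate the negative $L$-norm term that appears in hypothesis (2) by taking a suitable convex combination with hypothesis (1). Multiplying (1) by $k_3/k_1$ and adding to (2) gives
$$|\Re a(u,u)| + \frac{k_3}{k_1}\,|\Im a(u,u)| \;\ge\; k_2\,\|u\|_H^2.$$
Using the elementary bounds $|a(u,u)| \ge |\Re a(u,u)|$ and $|a(u,u)| \ge |\Im a(u,u)|$, the left-hand side is at most $(1 + k_3/k_1)\,|a(u,u)|$, so
$$|a(u,u)| \;\ge\; \frac{k_1 k_2}{k_1 + k_3}\,\|u\|_H^2 \qquad \text{for all } u \in H.$$
With the continuity of $a$ this is the key analytic fact and provides a tracking of the constants, which is what Lemma~\ref{lem:elliptic} is intended to supply to the rest of the paper.

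For the existence and uniqueness of $u \in H$ solving $a(u,v) = \ell(v)$ for a prescribed antilinear continuous $\ell$, I would invoke the Banach-Necas-Babuska theorem rather than the classical complex Lax-Milgram lemma, since the latter is traditionally stated under the stronger assumption $\Re a(u,u) \ge c\|u\|_H^2$. The two conditions to verify are (a) the inf-sup bound $\sup_{v\neq 0}|a(u,v)|/\|v\|_H \ge |a(u,u)|/\|u\|_H \ge C\|u\|_H$, obtained by testing with $v=u$ and using the modulus coercivity above; and (b) the non-degeneracy statement that $a(u,v)=0$ for all $u$ forces $v=0$, which follows by choosing $u=v$ and again appealing to the coercivity. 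The continuity bound on $\ell$ then yields the a priori estimate $\|u\|_H \le C^{-1}\|\ell\|_{H^*}$.

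The main obstacle here is conceptual: the conclusion delivers only modulus coercivity $|a(u,u)| \ge C\|u\|_H^2$, which is genuinely weaker than the real-part coercivity underlying the usual Lax-Milgram framework; a uniform phase rotation $e^{i\theta}a$ need not produce real coercivity because the optimal $\theta$ may depend on $u$. Switching to the Banach-Necas-Babuska formulation sidesteps this issue cleanly, and all remaining steps are routine inequalities.
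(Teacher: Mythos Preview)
Your argument is correct. The key step---adding $k_3/k_1$ times hypothesis (1) to hypothesis (2) to cancel the negative $L$-term, then bounding $|\Re a|+(k_3/k_1)|\Im a|$ by $(1+k_3/k_1)|a|$---is clean and yields the explicit constant $C=\dfrac{k_1k_2}{k_1+k_3}$. The lemma as stated only asks for this modulus-coercivity inequality, so your discussion of Banach--Ne\v{c}as--Babu\v{s}ka is extra but is a reasonable remark, since the paper later invokes ``Lax--Milgram'' on the strength of this lemma and your observation that modulus coercivity does not directly fit the classical real-part hypothesis is well taken.

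The paper's own proof is different and more elaborate. It splits into two cases according to the sign of $k_2\|v\|_H^2-k_3\|v\|_L^2$: in the first case it uses $|a|\ge|\Im a|\ge k_1\|v\|_L^2\ge (k_1k_2/k_3)\|v\|_H^2$; in the second it invokes the Pythagorean identity $|a|^2=(\Re a)^2+(\Im a)^2$, writes the resulting lower bound as a positive-definite quadratic form $Q(x_1,x_2)=(k_2x_1-k_3x_2)^2+k_1^2x_2^2$ in $(x_1,x_2)=(\|v\|_H^2,\|v\|_L^2)$, and extracts the smallest eigenvalue $\mu^-$ of the associated $2\times2$ matrix. The final constant is $\min\{k_1k_2/k_3,\sqrt{\mu^-}\}$. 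Your route is shorter and avoids the eigenvalue computation; the paper's route, by exploiting $|a|^2=(\Re a)^2+(\Im a)^2$ rather than $|a|\ge\max(|\Re a|,|\Im a|)$, can produce a somewhat sharper constant (e.g.\ for $k_1=k_2=k_3=1$ the paper gives $\sqrt{(3-\sqrt5)/2}\approx 0.618$ versus your $1/2$), which matters for the appendix where explicit constants are tracked. Either approach suffices for the lemma.
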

Then, there exists a constant $C>0$ which only depends  on $k_1, k_2,$ and  $k_3$  such that
$$|a(u,u)|\geq C\|u\|_{H}^2,\qquad \forall u\in H.$$
The proof of this result with the exact constant is postponed to the appendix.

\subsection{The Helmholtz problem}
Let $\alpha,\beta, \omega \in \R^+$ be fixed. Let $M(\alpha, \beta, \Omega)$ be the set of all real valued matrix functions  $A \in W^{1,\infty}(\Omega,\R^{3 \times 3})$ such that 
\begin{align*}
\alpha |\xi|^2 \leq  (A \xi, \xi ),\qquad |A(\xi,\zeta)| \leq \beta |\xi||\zeta| \end{align*} for all $(\xi,\zeta)\in  \C^3\times \C^3$. Here $(\cdot,\cdot)$ is the usual $\C^3$ inner product.\\[1mm]
Let us consider the following Helmholtz problem: 
$$
\begin{aligned}
- \div(A \nabla u_\d) - {   \omega^2} \ve_\ed u_\d &= i \omega f &&\text{ in } \O \\
u_\d &=0 &&\text{ on } \p \O.
\end{aligned}
$$ where $f\in L^2(\O,\C)$ satisfies $\supp{f} \subset \overline{\O_\d^+}$ and 
\begin{align} \label{Eps_ed}
\ve_\ed(x) = \ve_1 +  i\frac{\ve_2}{\ed^2} \; \text{ if } \; x \in \Omega_\ed^i, \qquad \ve_\ed(x) =\ve_3\; \text{ if }\; x \in \Omega\setminus\overline{\Omega_\ed^i}.
\end{align} The $\ve_i$'s are strictly positive constants.\\
The weak form of the above problem is given by
\begin{equation}\label{Grad-weak-2}
\left\{\begin{aligned}
&\hbox{Find $u_\d \in H^1_0(\Omega,\C)$ such that }\\
&\int_{\Omega}  A \nabla u_{\ed}  \cdot \overline{\nabla \psi}\,dx  -     \omega^2 \int_{\Omega} \ve_\ed  u_{\ed} \cdot \overline{\psi}\,dx = i \omega	\int_{\Omega} f \cdot \overline{\psi}\,dx ,~~~~\forall \psi \in H^1_0(\Omega,\C).      
\end{aligned}\right.
\end{equation}
The following lemma recalls a classical result which will be used in the upcoming sections.
\begin{lemma}\label{Lemma-classical} For every $ v \in H^1(\Omega_\ed, \C)$ one has
\begin{equation}\label{EQ29}
\begin{aligned}
&\|v \|_{L^2(\Omega_\ed, \C)}\leq C\Big(\|v\|_{L^2(\Omega^i_\ed, \C)}+\delta \|\nabla v\|_{L^2(\Omega_\ed, \C)}\Big),\\
&\d\|v \|^2_{L^2(\Gamma, \C)}\leq C\Big(\|v\|^2_{L^2(\Omega_\ed, \C)}+\delta^2 \|\nabla v\|^2_{L^2(\Omega_\ed, \C)}\Big).
\end{aligned}
\end{equation} The constant does not depend on $\ed$.
\end{lemma}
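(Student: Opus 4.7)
The plan is to reduce both estimates to two model inequalities on the reference cell $Y$ and then rescale and sum cell by cell.

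\textbf{Reference cell estimates.} On $Y=(0,1)^2\times(-1/2,1/2)$ I would first establish, for every $\phi\in H^1(Y,\C)$,
$$\|\phi\|_{L^2(Y)}\leq C\bigl(\|\phi\|_{L^2(Y_0)}+\|\nabla\phi\|_{L^2(Y)}\bigr),\qquad \|\phi\|_{L^2(Y\cap\{y_3=0\})}^2\leq C\|\phi\|_{H^1(Y)}^2.$$
The second is the standard trace theorem. For the first I argue by contradiction and compactness: if it failed there would exist a sequence $\phi_n\in H^1(Y,\C)$ with $\|\phi_n\|_{L^2(Y)}=1$ while $\|\phi_n\|_{L^2(Y_0)}+\|\nabla\phi_n\|_{L^2(Y)}\to 0$. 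Then $(\phi_n)$ is bounded in $H^1(Y)$, so along a subsequence $\phi_n\rightharpoonup\phi$ weakly in $H^1$ and strongly in $L^2$ by Rellich-Kondrachov. The limit satisfies $\nabla\phi\equiv 0$, hence is constant, and must vanish on $Y_0$ (which has positive measure) by strong $L^2(Y_0)$-convergence, forcing $\phi\equiv 0$ and contradicting $\|\phi\|_{L^2(Y)}=1$.

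\textbf{Scaling and summation.} For $\xi\in\Xi_\d$ set $Y_\d^\xi=\d(\xi+Y)$ and $Y_0^{\d,\xi}=\d(\xi+Y_0)\subset\Omega_\d^i$. Applying the reference cell estimates to $\phi(y)=v(\d\xi+\d y)$ and computing the Jacobian factors ($dx=\d^3 dy$, $|\nabla_x v|^2=\d^{-2}|\nabla_y\phi|^2$) yields, with $C$ independent of $\d$ and $\xi$,
$$\|v\|_{L^2(Y_\d^\xi)}^2\leq C\bigl(\|v\|_{L^2(Y_0^{\d,\xi})}^2+\d^2\|\nabla v\|_{L^2(Y_\d^\xi)}^2\bigr),$$
$$\d\|v\|_{L^2(Y_\d^\xi\cap\Gamma)}^2\leq C\bigl(\|v\|_{L^2(Y_\d^\xi)}^2+\d^2\|\nabla v\|_{L^2(Y_\d^\xi)}^2\bigr).$$
Summing these over $\xi\in\Xi_\d$ immediately gives both inequalities on $\widehat\CO_\d\times(-\d/2,\d/2)$.

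\textbf{Boundary strip.} It remains to control $v$ on the residual strip $\Lambda_\d\times(-\d/2,\d/2)$, where $\Omega_\d^i$ has no mass. I would exploit the $C^{1,1}$ regularity of $\partial\CO$: each incomplete cell meeting $\partial\CO$ can be paired with an adjacent full cell of $\widehat\CO_\d$, and a two-cell Poincaré-type inequality proved by the same contradiction argument on the reference union of two neighboring cells transfers the $L^2$-mass from the boundary cell to the interior cell plus a $\d$-scaled gradient term. Summing these local transfers over the $O(\d^{-1})$ boundary cells closes both estimates.

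\textbf{Main obstacle.} The first two steps are routine; the delicate point is the boundary strip, because $\Omega_\d^i$ was defined only over cells strictly inside $\CO$, so the cells overlapping $\partial\CO$ contribute no grid mass and the bound there must be produced purely through transfer from interior neighbors. The $C^{1,1}$ assumption on $\partial\CO$ is used precisely to guarantee that the pairing between boundary and interior cells is well defined and that the transfer constant is uniform in $\d$.
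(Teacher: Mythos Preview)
Your proposal is correct and follows the same strategy as the paper's proof: establish the Poincar\'e-type inequality on the reference cell $Y$ by a contradiction/compactness argument, then rescale and sum over the $\d$-cells. You are in fact more careful than the paper, which does not address the boundary strip $\Lambda_\d\times(-\d/2,\d/2)$ at all; your device of pairing each incomplete cell with an interior neighbor and transferring the $L^2$-mass via a two-cell reference inequality is the standard way to close that gap, and the $C^{1,1}$ regularity of $\partial\CO$ is exactly what makes the pairing uniform.
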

\begin{proof}
Note that for $\phi \in H^1(Y, \C)$
\begin{align} \label{IE:classical}
\|\phi\|_{L^2(Y, \C)} \leq C\Big(\|\phi\|_{L^2(Y_0, \C)}+ \|\nabla \phi\|_{L^2(Y, \C)}\Big).
\end{align}
This is a classical inequality,  one can proceed by contradiction and use the compact embedding of $L^2(Y, \C)$ in $H^1(Y, \C)$ for a simple proof. Then, use a change of variables to give the estimates in the $\d$-cells, add the obtained inequalities to get \eqref{EQ29}$_1$, then prove \eqref{EQ29}$_2$.
\end{proof}

\section{Existence of the solution to the Helmholtz problem }
We endow $L^2(\O,\C)$ with the scalar product
$$ \langle u,v \rangle=\int_\O \, u\,\ov{ v}\, dx \hbox{ }.$$ Denote
$$\ds \langle A \nabla u, \nabla v \rangle = \int_{\Omega}  A \nabla u  \cdot \overline{\nabla v}\,dx \quad \hbox{ for } u, v \in H^1(\Omega,\C)$$
and 
$$
\ve^\d=\ve_1\quad \hbox{a.e. in }\; \O^i_\d,\qquad \ve^\d=\ve_3\quad \hbox{a.e. in }\; \O\setminus\overline{\O^i_\d}.
$$

The wellposedness of the Helmholtz problem \eqref{Grad-weak-2} is proved in the following theorem. 
\begin{theorem}\label{Thm:5.1} Assume that $ \omega^2 \ve_3$  is not an eigenvalue  of $-\text{div } (A \nabla )$ in $H^1_0(\O^+,\C)$. Then, there exist two strictly positive constants $\d_0$ and $C$ such that for every $\d\in (0,\d_0]$ and every $f\in L^2(\O,\C)$, problem \eqref{Grad-weak-2} admits a unique solution $u_\d\in H^1_0(\O,\C)$ satisfying
\begin{equation}\label{EQ515}
\|u_\d\|_{H^1(\O,\C)}\leq {   C }\|f\|_{L^2(\O,\C)}.
\end{equation}
We remark about the constant in the appendix.
\end{theorem}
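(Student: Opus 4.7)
The plan is to set up $a_\d$ as a Fredholm operator of index zero, which reduces the theorem to a uniform a~priori estimate, and then to establish that estimate by a compactness-and-contradiction argument in which the $\d^{-2}$ contrast in $\ve_\d$ forces the weak limit to satisfy a homogeneous Dirichlet condition on $\Gamma$.

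First, I would note that the sesquilinear form
\[
a_\d(u,v)=\int_\O A\nabla u\cdot\overline{\nabla v}\,dx-\omega^2\int_\O\ve_\d u\overline v\,dx
\]
on $H^1_0(\O,\C)$ differs from a coercive form by a compact perturbation: shifting by $\lambda\langle u,v\rangle$ with $\lambda>\omega^2\max\{\ve_1,\ve_3\}$ makes the real part coercive thanks to the ellipticity of $A\in M(\alpha,\beta,\O)$, while the removed mass term is compact because $H^1_0(\O,\C)\hookrightarrow L^2(\O,\C)$ is compact. Hence $a_\d$ is Fredholm of index zero, and both existence in \eqref{Grad-weak-2} and the bound \eqref{EQ515} follow as soon as the uniform estimate $\|u_\d\|_{H^1}\le C\|f\|_{L^2}$ is proved for $\d\in(0,\d_0]$. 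Note that Lemma~\ref{lem:elliptic} is not directly applicable here, since the imaginary part of $a_\d$ only controls an $L^2$ norm over the thin grid $\O^i_\d$, not over all of $\O$.

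Second, I would argue by contradiction. If no such uniform constant exists, there are $\d_n\downarrow 0$, sources $f_n\in L^2(\O,\C)$ with $\supp f_n\subset\overline{\O^+_{\d_n}}$ and $\|f_n\|_{L^2}\to 0$, and solutions $u_n\in H^1_0(\O,\C)$ of \eqref{Grad-weak-2} normalised so that $\|u_n\|_{H^1}=1$. Testing \eqref{Grad-weak-2} with $u_n$ and taking the imaginary part gives
\[
\omega^2\frac{\ve_2}{\d_n^2}\|u_n\|^2_{L^2(\O^i_{\d_n},\C)}\le\omega\|f_n\|_{L^2}\|u_n\|_{L^2}\le\omega\|f_n\|_{L^2},
\]
so $\d_n^{-1}\|u_n\|_{L^2(\O^i_{\d_n},\C)}\to 0$. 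Feeding this into Lemma~\ref{Lemma-classical} yields $\|u_n\|_{L^2(\O_{\d_n},\C)}=O(\d_n)$ and $\|u_n\|^2_{L^2(\Gamma,\C)}=O(\d_n)$, so the trace $u_n|_\Gamma$ tends to $0$ in $L^2(\Gamma,\C)$.

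Third, up to a subsequence $u_n\rightharpoonup u$ in $H^1_0(\O,\C)$ and $u_n\to u$ strongly in $L^2(\O,\C)$. Combined with the compactness of $H^{1/2}(\Gamma)\hookrightarrow L^2(\Gamma)$, the preceding step forces $u|_\Gamma=0$, so $u|_{\O^\pm}\in H^1_0(\O^\pm,\C)$. For $\psi\in C^\infty_c(\O^+)$ the support lies inside $\O^+_{\d_n}$ for $n$ large, where $\ve_\d\equiv\ve_3$; passing to the limit in \eqref{Grad-weak-2} shows that $u|_{\O^+}$ solves $-\div(A\nabla u)-\omega^2\ve_3\,u=0$ in $\O^+$ with zero Dirichlet data, and the non-resonance hypothesis forces $u\equiv 0$ on $\O^+$. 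The same argument on $\O^-$ (where $f_n\equiv 0$ and the analogous non-resonance condition holds by the translation isomorphism $\O^-\simeq\O^+$) gives $u\equiv 0$ there, so $u_n\to 0$ in $L^2(\O,\C)$. Testing \eqref{Grad-weak-2} by $u_n$ once more and taking the real part then produces
\[
\alpha\|\nabla u_n\|^2_{L^2}\le\omega^2\ve_1\|u_n\|^2_{L^2(\O^i_{\d_n},\C)}+\omega^2\ve_3\|u_n\|^2_{L^2(\O\setminus\O^i_{\d_n},\C)}+\omega\|f_n\|_{L^2}\|u_n\|_{L^2}\longrightarrow 0,
\]
contradicting $\|u_n\|_{H^1}=1$.

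The hardest step is this third one, namely converting the $L^2$-smallness on the thin grid $\O^i_{\d_n}$ (granted by the strong damping term) into a genuine vanishing Dirichlet trace on $\Gamma$ for the weak limit. This is exactly what Lemma~\ref{Lemma-classical} is built for, and it is where the geometric hypothesis that $Y_0$ touches the top and bottom faces of the reference cell $Y$ (so that \eqref{IE:classical} holds) plays its decisive role; Lemma~\ref{lem:elliptic} is not needed for existence itself but will be used in the appendix to track the explicit value of $\d_0$ and of the constant $C$ in \eqref{EQ515}.
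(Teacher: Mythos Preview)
Your argument is correct and takes a genuinely different, more economical route than the paper's. The paper proceeds in three stages: (i) a first contradiction argument to produce $\d_0$ such that the homogeneous version of \eqref{Grad-weak-2} has only the trivial solution for every $\d\le\d_0$; (ii) for each fixed $\d\in(0,\d_0]$, existence via an artificial regularization---an extra absorption $-i\omega^2\theta$ is inserted on $\O\setminus\overline{\O^i_\d}$ so that Lemma~\ref{lem:elliptic} gives coercivity and hence a regularized solution $u^\theta_\d$ by Lax--Milgram, after which a second contradiction (Claim~1, invoking Step~1) bounds $u^\theta_\d$ uniformly in $\theta$ and lets $\theta\to0$; (iii) a third contradiction (Claim~2) for the bound uniform in $\d\in(0,\d_0]$, which---because $\d_0$ has already been fixed---must treat separately the subcases $\d_k\to0$ and $\d_k\to\d^*>0$. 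Your Fredholm-index-zero observation (compact perturbation of a coercive form) replaces all of stage~(ii), and by letting $\d_0$ be determined by the single contradiction itself you force $\d_n\to0$ and collapse (i) and (iii) into one pass, never needing the $\d^*>0$ case. What the paper's longer route buys is a constructive existence step with the coercivity constant of Lemma~\ref{lem:elliptic} recorded explicitly in the appendix; your route is cleaner but purely existential, and your closing remark that Lemma~\ref{lem:elliptic} will recover explicit values of $\d_0$ and $C$ is not quite right---in both proofs these constants come from compactness, not computation. One minor imprecision shared with the paper: the hypothesis excludes $\omega^2\ve_3$ as an eigenvalue only on $\O^+$, yet the argument needs it on $\O^-$ as well; your ``translation isomorphism $\O^-\simeq\O^+$'' should be the reflection $x_3\mapsto-x_3$, and tacitly assumes $A$ respects it.
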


\begin{proof}{\it Step 1.}  In this step we prove that there exists $\d_0>0$ such that: if $u_\d \in H^1_0(\O,\C)$, $\d\in (0,\d_0]$, satisfies
\begin{equation}\label{EQ516}
\langle A \nabla u_\d, \nabla \phi \rangle - \omega^2 \langle \ve^\d u_\d,\phi \rangle - i \omega^2 \int_{\O_\d^i} { \ve_2 \over \d^2} \, u_\d\, \overline{\phi}\, dx=0,\qquad \forall \phi\in H^1_0(\O,\C)
\end{equation}  then $u_\d=0$. \\[1mm]
First observe that $u_\d$ satisfying \eqref{EQ516} also satisfies $u_{\d}=0$ a.e. in $\O^i_\d$. \\[1mm]
We proceed by contradiction.  Suppose that for every  $n\in \N\setminus\{0\}$ there exist $\d_n\in (0,1/n]$ and $u_{\d_n}\in H^1_0(\O,\C)$ such that
$$\|u_{\d_n}\|_{L^2(\O,\C)}=1,\quad u_{\d_n}=0\ \ \hbox{a.e. in }\ \ \O^i_{\d_n},\quad \langle A \nabla u_{\d_n}, \nabla \phi \rangle= \omega^2 \langle \ve^{\d_n} u_{\d_n},\phi \rangle,\qquad \forall \phi\in H^1_0(\O,\C).$$ 
Set 
$$v_{\d_n}={u_{\d_n} \over \| u_{\d_n} \|_{L^2(\O,\C)}}.$$ 
By \eqref{EQ516}, we have (as $A \in M(\alpha, \beta, \Omega) $)
\begin{align*}
\|\nabla  v_{\d_n} \|_{L^2(\O,\C)}^2 \leq C 
\end{align*}
where $\ds C= \omega^2 \frac{\max\{\varepsilon_1,\varepsilon_3\}}{\alpha}$ is independent of $\d_n$. Then, up to a subsequence one has
$$v_{\d_n} \rightharpoonup v\;\hbox{weakly in } H^1_0(\O,\C),\qquad v_{\d_n} \to v\;\hbox{strongly in } L^2(\O,\C).$$ The strong convergence in $L^2(\O,\C)$ implies $\|v\|_{L^2(\O,\C)}=1$.
Using \eqref{EQ29}$_{1,2}$, we obtain that $v=0$ a.e. on $\Gamma$ (since $\|v_{\d_n} \|^2_{L^2(\Gamma)}\leq C\sqrt{\d_n}$) and thus
\begin{equation}\label{Eq:EV}
\begin{aligned}
&\langle A \nabla v, \nabla \phi \rangle= \omega^2 \ve_3 \langle v,\phi \rangle\qquad \forall \phi\in H^1_0(\O^\pm,\C)
.\end{aligned}
\end{equation} 
This means that $  \omega^2 \ve_3$ is an eigenvalue and $v$ an eigenfunction of $-\div (A \nabla)$ in $H^1_0(\O^\pm,\C)$.
$$-\div (A \nabla v)= \omega^2 \ve_3 v\qquad \hbox{in } \O^\pm,\qquad v\in H^1_0(\O^\pm,\C).$$
This contradicts the assumption of the theorem. Hence, the claim of this step is proved.\\[1mm]
In the following steps we assume $\d\in (0,\d_0]$.\\[1mm]
\noindent {\it Step 2.} In this step we fix $\d\in (0,\d_0]$ and we prove that problem \eqref{Grad-weak-2} admits solutions.\\[1mm]
Set
$$
\ve^\d_\theta={\ve_2\over \d^2}\quad \hbox{a.e. in }\; \O^i_\d,\qquad \ve^\d_\theta=\theta\quad \hbox{a.e. in }\; \O\setminus\overline{\O^i_\d},\\
$$ where $\theta$ is a strictly positive constant less than $\ds{\ve_2\over \d^2}$.\\
We consider the following variational problem:
\begin{equation}\label{Eq:u_d_theta}
\left\{\begin{aligned}
&\hbox{Find } \ \ u^\theta_\d\in H^1_0(\O,\C)\;\hbox{such that }\\
&\langle A \nabla u^\theta_\d, \nabla \phi \rangle- \omega^2 \langle \ve^\d u^\theta_\d,\phi \rangle-i \omega^2 \int_\O\ve_\theta^\d\, u^\theta_\d\, \overline{\phi}\, dx=i \omega \int_\O f\, \overline{\phi}\, dx,\qquad \forall \phi\in H^1_0(\O,\C).
\end{aligned}\right.
\end{equation} 
Define $B^\d_\theta\;: \; H^1_0(\O,\C) \times H^1_0(\O,\C) \to \C $ by 
$$B^\d_\theta(u,v) \doteq \langle  A \nabla u, \nabla v \rangle -  \omega^2 \langle \ve^\d u,v \rangle -i  \omega^2 \int_\O \ve_\theta^\d\, u\, \overline{v}\, dx.$$ 
Note that 
\begin{align*}
|\Im (B^\d_\theta(u,u))| \geq   \omega^2 \theta
\|u\|^2_{ L^2(\Omega,\C)}
,\qquad 
|\Re (B^\d_\theta(u,u))| \geq \alpha \|u\|_{H^1(\Omega,\C)}^2 - {  \tau \omega^2}\|u\|^2_{ L^2(\Omega,\C)}.
\end{align*}
where the constant $\tau= \max\{\ve_1,\ve_3\}$. Besides, we have
\begin{align*}
|B^\d_\theta(u,\phi)| &\leq \beta | \langle  u,  \phi \rangle_H | +  \omega^2 | \langle \ve^\d  u,\phi \rangle | + \omega^2 \int_\O \ve_\theta^\d\, |u\, \overline{\phi}| \, dx \leq C(\delta)\|u\|_H \|\phi\|_H.
 \end{align*}
 Hence by Lemma \ref{lem:elliptic} ($H=H^1_0(\O,\C)$, $L=L^2(\O,\C)$), we have that $B^\d_\theta$ is elliptic and bounded, that is
\begin{equation}\label{estB}
|B^\d_\theta(u,u)| \geq  C'(\d,\theta)  \|u\|_H^2.
\end{equation}
The explicit value of $C'(\d,\theta)$ is remarked in Section \ref{Sec:Annex}, therefore for $\theta$ small enough (less than a strictly positive constant $C(\alpha,\tau,\omega)$) one has $\ds C'(\d,\theta) = {\alpha\theta\over \tau}$. Hence, for $\theta$ small enough, by Lax-Milgram, we have a unique solution $u^\theta_\d$ of the problem \eqref{Eq:u_d_theta} and
\begin{align*}
\|\nabla u^\theta_\d\|_{L^2(\Omega,\C)} \leq \frac{\tau\omega}{\alpha\theta} ||f||_{L^2(\Omega,\C)},\qquad \|u^\theta_\d\|_{H^1_0(\Omega,\C)} \leq \frac{\tau\omega ~diam(\O)}{\alpha\theta} ||f||_{L^2(\Omega,\C)},
\end{align*}
where $diam(\O)$ comes from the Poincar\'e inequality.\\[1mm]
\noindent \textit{Claim 1:}  There exists a constant $C(\d,f)$ such that for $\theta$ small enough (less than  ${\ve_2/ \d^2}$ and $C(\alpha,\tau,\omega)$) $\|u_\d^\theta \|_{L^2(\O,\C)} \leq C(\d,f)$. \\[1mm]
First, let us replace the test function $\phi$ in \eqref{EQ516} with the solution to get
\begin{equation}
\begin{aligned}
&\langle A \nabla u^\theta_\d,\nabla u^\theta_\d \rangle  -i  \omega^2 \int_\O \ve_\theta^\d\, |u^\theta_\d|^2 \, dx=i \omega \int_\O f\, \overline{u_\d^\theta}\, dx +   \omega^2 \int_\Omega \ve^\d |u^\theta_\d|^2 \,dx. 
\end{aligned}
\end{equation}
By equating the real part one arrives at
\begin{equation*}
\begin{aligned}
\langle A \nabla u^\theta_\d,\nabla u^\theta_\d \rangle  &=  -\omega \int_\O \Im (f\, \overline{u_\d^\theta}) \, dx +   \omega^2 \int_\Omega \ve^\d |u^\theta_\d|^2 \,dx\\
\end{aligned}
\end{equation*}
and then
\begin{equation}\label{Eq:5.11}
\begin{aligned}
\alpha \| \nabla u^\theta_\d\|_{L^2(\Omega,\C)}^2  &\leq | \langle A \nabla u^\theta_\d,\nabla u^\theta_\d \rangle|  \leq  \omega \int_\O | f\, \overline{u_\d^\theta}| \, dx +  \omega^2 \int_\Omega \ve^\d |u^\theta_\d|^2 \,dx \\
 &\leq  \omega \| f \|_{L^2(\O,\C)} \| u^\theta_\d \|_{L^2(\O,\C)} +  \omega^2 \tau \| u^\theta_\d \|_{L^2(\O,\C)}^2.
 \end{aligned}
\end{equation}
Now, $f$ and $\d\in(0,\d_0]$ being fixed, we prove the claim by contradiction. If there exists a sequence $\{\theta_k\}_k$ converging to 0 such that $\| u^{\theta_k}_\d \|_{L^2(\O,\C)}\to+\infty$. Set 
$$v^{\theta_k}_\d={u^{\theta_k}_\d \over \| u^{\theta_k}_\d \|_{L^2(\O,\C)}}.$$ 
By \eqref{Eq:5.11}, we have
\begin{align*}
\|\nabla  v^{\theta_k}_\d \|_{L^2(\O,\C)}^2 = { \|\nabla u^{\theta_k}_\d \|_{L^2(\O,\C)}^2 \over \| u^{\theta_k}_\d \|_{L^2(\O,\C)}^2} 
 \leq C 
\end{align*}
where $\ds C= \frac{\omega}{\alpha}(1 + \omega \tau)$ is independent of $\theta_k$  (as $\ds\frac{\|f\|_{L^2(\Omega,\C)}}{\|u_\d^{\theta_k}\|_{L^2(\Omega,\C)}} \ll 1$). Thus $v^{\theta_k}_\d$ is bounded in $H^1_0(\O,\C)$ independent of $\theta_k$. Then, up to a subsequence one has
$$v^{\theta_k}_\d \rightharpoonup v_\d\;\hbox{weakly in } H^1_0(\O,\C),\qquad v^{\theta_k}_\d \to v_\d\;\hbox{strongly in } L^2(\O,\C).$$
Let us divide the equation \eqref{Eq:u_d_theta} by $\| u^{\theta_k}_\d \|_{L^2(\O,\C)}$ to get
\begin{align*}
&\langle A \nabla v^{\theta_k}_\d, \nabla \phi \rangle - \omega^2 \langle\ve^\d v^{\theta_k}_\d,\phi \rangle - i \omega^2 \int_\O \ve_{\theta_k}^\d\, v^{\theta_k}_\d\, \overline{\phi}\, dx={i \omega \over \| u^{\theta_k}_\d \|_{L^2(\O,\C)}} \int_\O f\, \overline{\phi}\, dx,\qquad \forall \phi\in H^1_0(\O,\C).
\end{align*}
Now, pass to the limit as $\theta_k \to 0$ to get
\begin{equation}\label{Eq:5.13}
\begin{aligned}
&\langle A \nabla v_\d, \nabla \phi \rangle - \omega^2 \langle \ve^\d v_\d,\phi \rangle - i \omega^2 \int_{\O_\d^i} { \ve_2 \over \d^2} \, v_\d\, \overline{\phi}\, dx=0\qquad \forall \phi\in H^1_0(\O,\C).
\end{aligned}
\end{equation} 
As $\|v^{\theta_k}_\d\|_{L^2(\Omega,\C)} =1$ and the strong convergence in $L^2(\O,\C)$, we have $\|v_\d\|_{L^2(\O,\C)}=1$. So by Step 1, $v_\d=0$  which is a contradiction.\\[1mm]
 As a consequence  one has
$$\forall\theta\in \big(0,\min\{{\ve_2/ \d^2},\,C(\alpha,\tau,\omega) \}\big],\qquad \|u_\d^\theta \|_{L^2(\O,\C)} \leq C\big(\d, f).$$  This proves the {\it Claim 1}.
 Thus $\|\nabla u^\theta_\d \|_{L^2(\O,\C)} \leq C(\d,f)$. \\[1mm]
Now, let $\theta_k$ be a sequence converging to 0, such that $u^{\theta_k}_\d \rightharpoonup u_\d $ weakly in $H^1_0(\O,\C)$. Hence, passing to the limit,  the equation \eqref{Eq:u_d_theta} becomes
\begin{equation}\label{Eq:5.15}
\begin{aligned}
&\langle A \nabla u_\d, \nabla \phi \rangle -  \omega^2 \langle \ve^\d u_\d,\phi \rangle - i \omega^2 \int_{\O_\d^i} { \ve_2 \over \d^2} \, u_\d\, \overline{\phi}\, dx=i \omega \int_\O f\, \overline{\phi}\, dx,\qquad \forall \phi\in H^1_0(\O,\C)
\end{aligned}
\end{equation}
which proves that \eqref{Grad-weak-2} admits solutions. Then, Step 1 ensures  that \eqref{Grad-weak-2} admits a unique  solution.\\[1mm]
\noindent {\it Step 3.} In this step we prove that the unique  solution of problem \eqref{Grad-weak-2} satisfies
$$\|u_\d\|_{H^1(\O,\C)}\leq C\|f\|_{L^2(\O,\C)}.$$
{\it Claim 2:} There exists a constant $C>0$ such that 
\begin{equation}\label{EQ69+}
\sup_{\d\in (0,\d_0],\; f\in L^2(\O,\C),\, \|f\|_{L^2(\O,\C)}=1} \|u_\d(f)\|_{L^2(\O,\C)} \leq C.
\end{equation} Here, $u_\d(f)$ denote the unique solution to \eqref{Grad-weak-2}.\\[1mm]
Suppose not, then there exists a sequence $\{\d_k\}_{k\in \N}$  converging to $\d^*\in [0,\d_0]$ and $f_k\in L^2(\O,\C)$ with $\|f_k\|_{L^2(\O,\C)}=1$, such that $\ds \lim_{k\to+\infty}\|u_{\d_k}\|_{L^2(\O,\C)} \to + \infty$.  \\[1mm]
{\bf Case 1: $\d^*=0$.} Now,   \eqref{Grad-weak-2} gives
\begin{equation}
\begin{aligned}
&\langle A \nabla u_{\d_k}, \nabla u_{\d_k} \rangle - \omega^2 \int_\Omega \ve^{\d_k} | u_{\d_k}|^2 \,dx  -i \omega^2 \int_{\O_{\d_k}^i} {\ve_2 \over {\d_k}^2} \, | u_{\d_k}|^2 \, dx= i \omega  \int_\Omega f_k \overline{u_{\d_k}}\,dx .
\end{aligned}
\end{equation}
By considering the imaginary parts, one gets

\begin{equation}
\begin{aligned}
& \omega^2   \int_{\O_{\d_k}^i} {\ve_2 \over {\d_k}^2} \, | u_{\d_k}|^2 \, dx \leq   \omega \int_\Omega | \Re (f_k \overline{u_{\d_k}})|\,dx\leq \omega \|f_k\|_{L^2(\Omega,\C)} \|u_{\d_k}\|_{L^2(\Omega,\C)}\leq C \|u_{\d_k}\|_{L^2(\Omega,\C)}.
\end{aligned}
\end{equation}
Set $$v_{\d_k} = {u_{\d_k} \over \|u_{\d_k}\|_{L^2(\Omega,\C)}}.$$
Thus one gets
\begin{equation}
\begin{aligned}
&  \omega^2  \int_{\O_{\d_k}^i} {\ve_2 \over {\d_k}^2} \, | v_{\d_k}|^2 \, dx \leq    { C\over  \|u_{\d_k}\|_{L^2(\Omega,\C)} },
\end{aligned}
\end{equation}
and hence the LHS converges to 0. \\
The real part gives
\begin{align}\label{Est:NablaV-d-k}
\|\nabla  v_{\d_k} \|_{L^2(\O,\C)} &\leq   C. 
\end{align}
Then, from Lemma \ref{Lemma-classical} and the above estimates we get
$$\|v_{\d_k} \|_{L^2(\Gamma,\C)}\leq C\sqrt{\d_k}$$
where $C$ is independent of $\d_k$. So, up to a subsequence there exists $v \in H^1_0(\Omega,\C)$ such that as $k \to \infty$
$$v_{\d_k} \rightharpoonup v \,\quad \text{weakly in } H^1(\Omega,\C),\qquad v_{\d_k} \to v\;\hbox{strongly in } L^2(\O,\C).$$ The strong convergence in $L^2(\O,\C)$ implies $\|v\|_{L^2(\O,\C)}=1$. Moreover, we have $v=0$ a.e. on $\Gamma$.\\[1mm]
Let us divide the equation \eqref{Grad-weak-2} by $\| u_{\d_k} \|_{L^2(\O,\C)}$ to get
\begin{align}\label{Eq:614}
&\langle A \nabla v_{\d_k}, \nabla \phi \rangle -  \omega^2 \langle \ve^{\d_k}v_{\d_k},\phi \rangle -i\omega^2 \int_{\O_{\d_k}^i} { \ve_2 \over \d^2_k}\, v_{\d_k}\, \overline{\phi}\, dx={i \omega \over \| u_{\d_k} \|_{L^2(\O,\C)}} \int_\O f_k\, \overline{\phi}\, dx,\qquad \forall \phi\in H^1_0(\O,\C).
\end{align}
Let $\phi^+$ (resp. $\phi^-$) be in ${\cal D}(\O^+,\C)$ (resp. ${\cal D}(\O^-,\C)$). If $\d_k$ is sufficiently small, one has
$$
\begin{aligned}
&\int_{\Omega^+}  A \nabla v_{\ed_k}  \cdot \overline{\nabla \phi^+}\,dx  -  \omega^2   \int_{\Omega^+} \ve_3  v_{\ed_k}  \overline{\phi^+}\,dx = i\omega {1 \over \| u_{\d_k} \|_{L^2(\O,\C)}} \int_{\O^+} f \overline{\phi^+} \,dx,\\
\hbox{(resp.}\quad &\int_{\Omega^-}  A \nabla v_{\ed_k}  \cdot \overline{\nabla \phi^-}\,dx  -  \omega^2   \int_{\Omega^-} \ve_3  v_{\ed_k}  \overline{\phi^-}\,dx = 0\;\hbox{).}
\end{aligned}
$$	
Passing to the limit yield
$$
\int_{\O^+ }  A \nabla v  \cdot \nabla \overline{ \phi}\,dx  +   \omega^2 \ve_3  \int_{\O^+}   v  \overline{\phi}\,dx =0,\qquad \forall \phi^+\in {\cal D}(\O^+,\C)
$$
and
\begin{align}\label{EQ34}
	\int_{\O^- } A \nabla v  \cdot \nabla \overline{ \phi}\,dx  +   \omega^2  \ve_3 \int_{\O^-}   v  \overline{\phi}\,dx = 0,\qquad \forall \phi^-\in {\cal D}(\O^-,\C).
\end{align}
A density argument gives
\begin{equation*}
\begin{aligned}
&\langle A \nabla v, \nabla \phi \rangle- \omega^2 \ve_3 \langle v,\phi \rangle =0\qquad \forall \phi\in H^1_0(\O^\pm,\C)
\end{aligned}
\end{equation*}
where  $v\in H^1_0(\O^\pm,\C)$ which, thanks to Step 1, contradicts the hypothesis of the theorem. \\[1mm]
 {\bf Case 2: $\d^* \neq 0$.}  
As above (see the estimate \eqref{Est:NablaV-d-k}) we show that the sequence $\{v_{\d_k}\}_k$ is uniformly bounded in $H^1_0(\O,\C)$. So, up to a subsequence there exists $v_{\d^*} \in H^1_0(\Omega,\C)$ such that as $k \to \infty$
$$v_{\d_k} \rightharpoonup v_{\d^*} \,\quad \text{weakly in } H^1_0(\Omega,\C) \,\text{ and }v_{\d_k} \to v_{\d^*} \,\quad \text{strongly in } L^3(\Omega,\C).$$
The second convergence is due to Rellich-Kondrasov Theorem.
The imaginary part of the energy gives
\begin{equation*}
  \int_{\O_{\d_k}^i}  | v_{\d_k}|^2 \, dx \leq    { C\d_k^2\over  \|u_{\d_k}\|_{L^2(\Omega,\C)}}.
 \end{equation*}
Note that 
$$\chi_{\O_{\d_k}^i}(x) \to \chi_{\O_{\d^*}^i}(x)\;\;\hbox{for a.e. $x\in \O$ }$$ where $\chi_D$ denotes the characteristic function of the set $D$. So, the above estimate and convergence imply that
$$\chi_{\O_{\d_k}^i}v_{\d_k}\to 0  \,\quad \text{strongly in } L^2(\Omega,\C).$$ Since the sequence $\{v_{\d_k}\}_k$ converges to $v_{\d^*}$ strongly in $L^3(\Omega,\C)$, we obtain $v_{\d^*}=0$ a.e. in $\O^i_{\d^*}$.\\
 Besides, we get that $\|v_{\d_k}\|_{L^2(\O,\C)} =1$ for all $k$ and hence $\|v_{\d^*}\|_{L^2(\O_{\d^*}^*)} =1$ as $v_{\d_k}$ converges to $v_{\d^*}$ strongly in $L^2(\O,\C)$.
Finally,  passing to the limit in \eqref{Eq:614}, we obtain that  $v_{\d^*}$ satisfies
 \begin{equation}
\begin{aligned}
&\langle A \nabla v_{\d^*}, \nabla \phi \rangle_{L^2(\O_{\d^*}^*, \C^3)}- \omega^2 \ve_3 \langle v_{\d^*},\phi \rangle_{L^2(\O_{\d^*}^*, \C)} =0\qquad \forall \phi\in H^1_0(\O_{\d^*}^*, \C).
\end{aligned}
\end{equation}
Due to the result of Step 1, we have $v_{\d^*}=0$ which is a contradiction. This completes the theorem.
\end{proof}

\begin{corollary} For every $\d\in (0,\d_0]$ and every $f\in L^2(\O,\C)$, the solution   $ u_{\ed}\in H_0^1(\Omega,\C)$ to the problem \eqref{Grad-weak-2} satisfies 
\begin{align}\label{Ap-Est-1b}
\|u_\d\|_{H^1(\Omega,\C)} + \ed^{-1}\|u_\d\|_{L^2(\Omega_\ed^i)}\leq C\|f\|_{L^2(\O,\C)}
\end{align} where $C>0$ is independent of $\d$ and $f$.
\end{corollary}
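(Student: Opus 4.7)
The $H^1$ bound on $u_\delta$ is exactly the conclusion of Theorem \ref{Thm:5.1}, so the only new ingredient required is the weighted estimate $\delta^{-1}\|u_\delta\|_{L^2(\Omega_\delta^i)}\leq C\|f\|_{L^2(\Omega,\C)}$. My plan is to extract this weighted estimate from the imaginary part of the equation tested against the solution itself.

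Concretely, I would set $\phi=u_\delta$ in \eqref{Grad-weak-2}, obtaining
\begin{equation*}
\langle A\nabla u_\delta,\nabla u_\delta\rangle-\omega^2\int_\Omega \varepsilon^\delta|u_\delta|^2\,dx - i\omega^2\int_{\Omega_\delta^i}\frac{\varepsilon_2}{\delta^2}|u_\delta|^2\,dx = i\omega\int_\Omega f\,\overline{u_\delta}\,dx.
\end{equation*}
Since $A$ is real-valued and $\varepsilon^\delta$ is real, the first two terms on the left are real, so taking imaginary parts kills them and yields
\begin{equation*}
-\omega^2\int_{\Omega_\delta^i}\frac{\varepsilon_2}{\delta^2}|u_\delta|^2\,dx = \omega\,\Re\!\left(\int_\Omega f\,\overline{u_\delta}\,dx\right).
\end{equation*}
Taking absolute values and using Cauchy--Schwarz gives
\begin{equation*}
\frac{\omega\,\varepsilon_2}{\delta^2}\|u_\delta\|_{L^2(\Omega_\delta^i,\C)}^2 \leq \|f\|_{L^2(\Omega,\C)}\|u_\delta\|_{L^2(\Omega,\C)}.
\end{equation*}

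To conclude, I would invoke the $H^1$-bound \eqref{EQ515} from Theorem \ref{Thm:5.1} to estimate $\|u_\delta\|_{L^2(\Omega,\C)}\leq C\|f\|_{L^2(\Omega,\C)}$. Substituting into the previous inequality gives
\begin{equation*}
\delta^{-2}\|u_\delta\|_{L^2(\Omega_\delta^i,\C)}^2 \leq C\|f\|_{L^2(\Omega,\C)}^2,
\end{equation*}
i.e.\ $\delta^{-1}\|u_\delta\|_{L^2(\Omega_\delta^i,\C)}\leq C\|f\|_{L^2(\Omega,\C)}$, and combining with \eqref{EQ515} yields the claim.

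There is no real obstacle here: the contrast $\varepsilon_2/\delta^2$ in the imaginary part of the coefficient is precisely what produces the weighted $L^2$-control on $\Omega_\delta^i$, and the separation of real and imaginary parts isolates this quantity for free. The only minor care needed is to verify that the constant produced is independent of $\delta$, which is clear because $\varepsilon_2$, $\omega$ and the constant from Theorem \ref{Thm:5.1} do not depend on $\delta$.
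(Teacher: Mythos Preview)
Your proof is correct and follows essentially the same route as the paper: test \eqref{Grad-weak-2} with $u_\delta$, take the imaginary part to isolate the weighted term $\frac{\varepsilon_2}{\delta^2}\|u_\delta\|_{L^2(\Omega_\delta^i)}^2$, and then use Cauchy--Schwarz together with the $H^1$-bound \eqref{EQ515} from Theorem~\ref{Thm:5.1}. The paper's argument is identical in structure and detail.
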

\begin{proof} The imaginary part of the energy of \eqref{Eq:5.15} gives

\begin{equation*}
\begin{aligned}
&   {\ve_2 \over {\d}^2} \int_{\O_{\d}^i}  \, | u_{\d}|^2 \, dx \leq   \omega \int_\Omega | \Re (f \overline{u_{\d}})|\,dx\leq \omega \|f\|_{L^2(\Omega,\C)} \|u_{\d}\|_{L^2(\Omega,\C)}.
\end{aligned}
\end{equation*}
Thus one gets

\begin{equation}
\begin{aligned}
&   \d^{-2}    \| u_{\d}\|^2_{L^2(\O_{\d}^i,\C)} \leq {1 \over \omega \ve_2 } \|f\|_{L^2(\Omega,\C)} \|u_{\d}\|_{L^2(\Omega,\C)} \leq C\|f\|^2_{L^2(\Omega,\C)}.
\end{aligned}
\end{equation}
\end{proof}
Lemma \ref{Lemma-classical} and estimates \eqref{Ap-Est-1b} yield
\begin{align}\label{Ap-Est-1c}
\|u_\ed\|_{L^2(\Omega_\d,\C)} \leq C\d \|f\|_{L^2(\Omega,\C)} ,\qquad  \|\nabla u_\ed\|_{L^2(\Omega_\ed,\C)} \leq C \|f\|_{L^2(\Omega,\C)},\qquad \|u_\ed\|_{L^2(\Gamma,\C)} \leq C \sqrt{\ed} \|f\|_{L^2(\Omega,\C)}.
\end{align}

\begin{proposition}
There exists  $u\in H^1_0(\O,\C)$ such that
\begin{equation}\label{Eq:5.25}
u_\d \rightharpoonup  u\quad \hbox{weakly in } H^1_0(\O,\C).
\end{equation} Moreover, $u=0$ a.e. in $\O^-$ and $u$ restricted to $\O^+$ belongs to $H^1_0(\O^+,\C)$ and is the unique solution of
\begin{equation}\label{Eq:u-}
\int_{\O^+} A \nabla u\cdot \overline{\nabla\phi}\, dx - \omega^2 \ve_3 \int_{\O^+} u\,\overline{\phi}\,dx=i \omega \int_{\O^+}f\,\overline{\phi}\, dx,\qquad \forall \phi\in H^1_0(\O^+,\C).
\end{equation}
\end{proposition}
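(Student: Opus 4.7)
The plan is to combine the uniform bound from \eqref{Ap-Est-1b}-\eqref{Ap-Est-1c} with a localization argument on test functions to derive the limit equation piece by piece on $\Omega^-$ and $\Omega^+$, then appeal to the non-eigenvalue assumption for uniqueness.

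First, by \eqref{Ap-Est-1b} the family $\{u_\d\}_\d$ is uniformly bounded in $H^1_0(\O,\C)$, so up to a subsequence there exists $u\in H^1_0(\O,\C)$ with $u_\d\rightharpoonup u$ weakly in $H^1_0(\O,\C)$ and strongly in $L^2(\O,\C)$. From the third estimate in \eqref{Ap-Est-1c} one has $\|u_\d\|_{L^2(\Gamma,\C)}\leq C\sqrt{\d}\to 0$; combined with continuity of the trace $H^1_0(\O,\C)\to L^2(\Gamma,\C)$ and the weak convergence of $u_\d$, this forces the trace of $u$ on $\Gamma$ to vanish. Consequently $u_{|\O^+}\in H^1_0(\O^+,\C)$ and $u_{|\O^-}\in H^1_0(\O^-,\C)$.

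Next I would show $u\equiv 0$ in $\O^-$. Pick any $\phi\in\mathcal{D}(\O^-,\C)$; for $\d$ small enough, $\supp(\phi)\subset\O_\d^-$ is disjoint from $\overline{\O_\d}$, hence also from $\overline{\O_\d^i}$. Therefore the contrast term $i\omega^2\int_{\O_\d^i}\frac{\ve_2}{\d^2}u_\d\overline{\phi}\,dx$ vanishes, $\ve^\d=\ve_3$ on $\supp(\phi)$, and $f=0$ on $\O^-$, so \eqref{Grad-weak-2} reduces to
\begin{equation*}
\int_{\O^-}A\nabla u_\d\cdot\overline{\nabla\phi}\,dx-\omega^2\ve_3\int_{\O^-}u_\d\,\overline{\phi}\,dx=0.
\end{equation*}
Passing to the limit using weak $H^1$ and strong $L^2$ convergence, and extending by density to all $\phi\in H^1_0(\O^-,\C)$, yields that $u_{|\O^-}$ solves the homogeneous Helmholtz eigenvalue problem for $-\div(A\nabla)$ at the value $\omega^2\ve_3$. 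Since by assumption $\omega^2\ve_3$ is not an eigenvalue on $H^1_0(\O^-,\C)$, we get $u=0$ a.e.\ in $\O^-$.

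By the same localization with $\phi\in\mathcal{D}(\O^+,\C)$, the support of $\phi$ is disjoint from $\overline{\O_\d^i}$ for $\d$ small, so \eqref{Grad-weak-2} reduces to
\begin{equation*}
\int_{\O^+}A\nabla u_\d\cdot\overline{\nabla\phi}\,dx-\omega^2\ve_3\int_{\O^+}u_\d\,\overline{\phi}\,dx=i\omega\int_{\O^+}f\,\overline{\phi}\,dx,
\end{equation*}
and passing to the limit together with the density argument and the already-established vanishing trace of $u$ on $\Gamma$ gives \eqref{Eq:u-}. Uniqueness of the solution of this elliptic problem (again from the non-eigenvalue hypothesis) identifies the limit $u$ independently of the subsequence, so the whole family converges as in \eqref{Eq:5.25}. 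The only mild subtlety is making sure the thin-layer terms really disappear for compactly supported test functions in $\O^\pm$, which is handled by choosing $\d$ small enough so that $\supp(\phi)$ lies outside $\overline{\O_\d}$; no new a priori estimates beyond \eqref{Ap-Est-1b}-\eqref{Ap-Est-1c} are needed.
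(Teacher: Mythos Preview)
Your argument is correct and follows essentially the same route as the paper: extract a weakly convergent subsequence from the uniform $H^1$ bound, use \eqref{Ap-Est-1c}$_3$ to obtain $u=0$ on $\Gamma$, localize with test functions in $\mathcal{D}(\Omega^\pm,\C)$ so that the layer terms drop out for $\delta$ small, pass to the limit, and invoke the non-eigenvalue hypothesis both for uniqueness on $\Omega^+$ and to conclude $u\equiv 0$ on $\Omega^-$. Your write-up is in fact slightly more explicit than the paper's about why $u$ vanishes on $\Omega^-$ and why the whole family (not just a subsequence) converges.
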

\begin{proof} First, there exist a subsequence of $\{\d\}$, still denoted $\{\d\}$, and $u\in H^1_0(\O,\C)$ such that 
$$
u_\d \rightharpoonup u\quad \hbox{weakly in } H^1_0(\O,\C).
$$ 
Observe that due to \eqref{Ap-Est-1c}$_3$, one has $u=0$ a.e. on $\Gamma$. \\
Let $\psi^+$ (resp. $\psi^-$) be in ${\cal D}(\O^+,\C)$ (resp. ${\cal D}(\O^-,\C)$). For every $\d$ sufficiently small, one has
$$
\begin{aligned}
&\int_{\Omega^+}  A\nabla u_{\ed}  \cdot \overline{\nabla \psi^+}\,dx  -   \omega^2 \ve_3  \int_{\Omega^+}   u_{\ed}  \overline{\psi^+}\,dx = i \omega\int_{\O^+} f \overline{\psi^+} \,dx,\\
\hbox{(resp.}\quad&\int_{\Omega^-}  A \nabla u_{\ed}  \cdot \overline{\nabla \psi^-}\,dx  - \omega^2   \ve_3   \int_{\Omega^-}   u_{\ed}  \overline{\psi^-}\,dx = 0\;\hbox{).}
\end{aligned}
$$	
Passing to the limit yield
$$
\int_{\O^+ }  A \nabla u  \cdot \nabla \overline{ \psi}\,dx  -   \omega^2 \ve_3  \int_{\O^+}   u  \overline{\psi}\,dx = i \omega\int_{\O^+} f \overline{\psi} \,dx,\qquad \forall \psi^+\in {\cal D}(\O^+,\C)
$$
and
\begin{align}\label{Eq:5.27}
	\int_{\O^- }  A \nabla u  \cdot \nabla \overline{ \psi}\,dx  -   \omega^2  \ve_3 \int_{\O^-}   u  \overline{\psi}\,dx = 0,\qquad \forall \psi^-\in {\cal D}(\O^-,\C).
\end{align}
A density argument gives \eqref{Eq:u-}. This gives the existence, the uniqueness is followed by a similar arguments in Step 1 of Theorem \ref{Thm:5.1}.
\end{proof} 

As the boundary of $\cal O$ is ${\cal C}^{1,1}$, we have $u_{|\O^+}$ belongs to $H^1_0(\O^+,\C)\cap H^2(\O^+,\C)$ and 
$$
\|u\|_{H^2(\O^+,\C)}\leq C\|u\|_{H^1(\O^+,\C)}\leq C\|f\|_{L^2(\O,\C)}.
$$
We recall the following classical result: for every $\phi\in H^1({\O^+})$ one has
\begin{equation}\label{EQ324}
 \|\phi\|_{L^2(\CO \times(0,\d/2),\C)}^2  \leq \d   \|\phi\|_{L^2(\O^+,\C)}^2  + \d^2 \left\|\frac{\p \phi}{\p x_3} \right\|_{L^2(\O^+,\C)}^2.
\end{equation}
As a consequence, the solution to problem \eqref{Eq:u-} satisfies (remind that $\nabla u\in H^1(\O^+,\C^3)$, $\nabla u=0$ in $\O^-$ and $u=0$ a.e. on $\Gamma$)
\begin{equation}
\begin{aligned}\label{Est-u}
 \|\nabla u\|_{L^2(\O_\d,\C)} &= \|\nabla u\|_{L^2(\CO \times (0,\d /2),\C)}  \leq C\d^{1/2}\|u\|_{H^2(\O^+,\C)},\\
\Longrightarrow\quad \|u\|_{L^2(\O_\d,\C)}& = \|u\|_{L^2(\CO \times (0,\d /2),\C)} \leq C\d  \|\nabla u\|_{L^2(\CO \times (0,\d /2),\C)}  \leq C\d^{3/2}\|u\|_{H^2(\O^+,\C)}.
\end{aligned}
\end{equation}
The constant does not depend on $\d$.

\begin{lemma} The solution $u_\d$ satisfies
\begin{align}\label{Est:u_d-u}
 \|u_\d - u\|_{L^2(\O_\d,\C)} \leq C  \d^{3/2}  \|f\|_{L^2(\Omega,\C)},\qquad \|u_\d - u\|_{H^1(\O,\C)} \leq C  \d^{1/2}  \|f\|_{L^2(\Omega,\C)}.\quad
\end{align}
The constant does not depend on $\d$. 
\end{lemma}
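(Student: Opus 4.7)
The plan is to analyze the difference $w_\d:=u_\d-u$ (extending $u$ by $0$ to $\O^-$ so that $w_\d\in H^1_0(\O,\C)$) via an energy argument modeled on Theorem \ref{Thm:5.1}. Since $u|_{\O^+}\in H^1_0(\O^+,\C)\cap H^2(\O^+,\C)$, Green's formula on $\O^+$ allows testing \eqref{Eq:u-} against arbitrary $\phi\in H^1_0(\O,\C)$ (not just $H^1_0(\O^+,\C)$), at the price of a conormal boundary term on $\Gamma$. Subtracting the resulting identity from \eqref{Grad-weak-2} yields, for every $\phi\in H^1_0(\O,\C)$,
\[
\int_\O A\nabla w_\d\cdot\overline{\nabla\phi}\,dx - \omega^2\int_\O \ve_\d\, w_\d\,\overline\phi\,dx = -\int_\Gamma(A\nabla u\cdot e_3)\,\overline\phi\,d\sigma + \omega^2\int_{\CO\times(0,\d/2)}(\ve_\d-\ve_3)\,u\,\overline\phi\,dx,
\]
whose RHS is supported on the interface $\Gamma$ and on the grid portion of the upper thin half, where $\ve_\d\ne\ve_3$.

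I would then test with $\phi=w_\d$. The LHS is precisely the bilinear form analyzed in the wellposedness proof, so Lemma \ref{lem:elliptic} combined with Lemma \ref{Lemma-classical} (as in Steps 1--2 of Theorem \ref{Thm:5.1}) provides the uniform-in-$\d$ bound $\|w_\d\|_{H^1(\O)}^2\leq C\,|\mathrm{RHS}(w_\d)|$. For the RHS, I would use $\|A\nabla u\cdot e_3\|_{L^2(\Gamma)}\leq C\|u\|_{H^2(\O^+,\C)}\leq C\|f\|_{L^2(\O,\C)}$ from the $H^2$-regularity of $u$, the trace bound $\|w_\d\|_{L^2(\Gamma)}=\|u_\d\|_{L^2(\Gamma)}\leq C\sqrt{\d}\,\|f\|_{L^2(\O,\C)}$ from \eqref{Ap-Est-1c}$_3$, and $\|u\|_{L^2(\CO\times(0,\d/2),\C)}\leq C\d^{3/2}\|f\|_{L^2(\O,\C)}$ from \eqref{Est-u}$_2$. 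The boundary integral thus contributes $O(\sqrt{\d}\,)\|f\|^2$; the bounded part of the volume term is of order $\d^{3/2}\|f\|\,\|w_\d\|_{L^2(\O_\d,\C)}$; and the singular $\ve_2/\d^2$ piece is absorbed via Young's inequality into the imaginary-part coercivity $(\omega^2\ve_2/\d^2)\|w_\d\|_{L^2(\O^i_\d,\C)}^2$ supplied by the LHS. This produces $\|w_\d\|_{H^1(\O,\C)}\leq C\sqrt{\d}\,\|f\|_{L^2(\O,\C)}$.

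For the $L^2(\O_\d)$ estimate, applying Lemma \ref{Lemma-classical}$_1$ to $w_\d$ gives $\|w_\d\|_{L^2(\O_\d,\C)}\leq C\bigl(\|w_\d\|_{L^2(\O^i_\d,\C)}+\d\|\nabla w_\d\|_{L^2(\O_\d,\C)}\bigr)$; the second summand is $O(\d^{3/2})\|f\|$ from the $H^1$ estimate just obtained. For the first summand I would revisit the imaginary part of the energy identity for $w_\d$ with the newly-acquired $H^1$-smallness in hand: the $L^2(\O)$ factors entering the pairings on the RHS improve, and re-running the absorption argument sharpens the a priori $O(\d)$ bound on $\|w_\d\|_{L^2(\O^i_\d,\C)}$ to the required $O(\d^{3/2})\|f\|$.

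The main obstacle is the same as in Theorem \ref{Thm:5.1}: producing uniform-in-$\d$ coercivity from Lemma \ref{lem:elliptic} despite the $1/\d^2$ singularity, and correctly absorbing the singular $\ve_2/\d^2$ contribution of the RHS. Once that machinery is in place, the $\sqrt{\d}$ scale of the boundary datum $u_\d|_\Gamma$ and the $\d^{3/2}$ scale of $u$ on the thin slab (thanks to the $H^2$-regularity of $u$ and $u=0$ on $\Gamma$) propagate cleanly to deliver both claimed rates.
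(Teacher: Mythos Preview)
Your central step --- that ``Lemma~\ref{lem:elliptic} combined with Lemma~\ref{Lemma-classical} \ldots\ provides the uniform-in-$\d$ bound $\|w_\d\|_{H^1(\O)}^2\le C|\mathrm{RHS}(w_\d)|$'' --- is where the argument breaks down. The sesquilinear form $B_\d$ does not satisfy hypothesis~1 of Lemma~\ref{lem:elliptic} with $L=L^2(\O)$: its imaginary part equals $-\omega^2\ve_2\d^{-2}\|\,\cdot\,\|_{L^2(\O_\d^i)}^2$, which vanishes on every function supported away from the grid, so no constant $k_1>0$ with $|\Im B_\d(v,v)|\ge k_1\|v\|_{L^2(\O)}^2$ exists. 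Lemma~\ref{Lemma-classical} only lifts control from $L^2(\O_\d^i)$ to $L^2(\O_\d)$, never to $L^2(\O)$. In the proof of Theorem~\ref{Thm:5.1}, Lemma~\ref{lem:elliptic} is applied solely to the \emph{regularized} form $B_\d^\theta$, and its coercivity constant $C'(\d,\theta)=\alpha\theta/\tau$ degenerates as $\theta\to 0$; the uniform-in-$\d$ estimate of Step~3 comes from a compactness/contradiction argument using the eigenvalue hypothesis, not from coercivity. Hence an inequality $|B_\d(w_\d,w_\d)|\ge C\|w_\d\|_{H^1}^2$ with $C$ independent of $\d$ is simply not available, and the later talk of ``absorbing into the imaginary-part coercivity supplied by the LHS'' cannot be reconciled with the claimed global $H^1$ lower bound.

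The paper proceeds without any such coercivity. After testing with $\psi=u_\d-u$, it takes the \emph{imaginary part first}: the grid term $\omega^2\ve_2\d^{-2}\|u_\d-u\|_{L^2(\O_\d^i)}^2$ is balanced directly against $\omega^2\ve_2\d^{-2}\|u\|_{L^2(\O_\d^i)}\|u_\d-u\|_{L^2(\O_\d^i)}$, giving at once $\|u_\d-u\|_{L^2(\O_\d^i)}\le C\|u\|_{L^2(\O_\d^i)}\le C\d^{3/2}\|f\|$ via \eqref{Est-u}. Only then is the \emph{real part} used, with the singular source term now tamed by the $L^2(\O_\d^i)$ bound just obtained, to get $\|\nabla(u_\d-u)\|_{L^2(\O)}\le C\sqrt{\d}\,\|f\|$; finally \eqref{EQ29}$_1$ with the improved gradient bound upgrades the $L^2(\O_\d)$ estimate from order $\d$ to $\d^{3/2}$. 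Your right-hand-side ingredients (the conormal trace $A\nabla u\cdot e_3$ on $\Gamma$, the $\sqrt{\d}$ size of $u_\d|_\Gamma$, and \eqref{Est-u}) are correct --- and you are right that a boundary term on $\Gamma$ appears when one tests \eqref{Eq:u-} against $\phi\in H^1_0(\O)$ --- but the proof should be organized around this imaginary-first/real-second decomposition rather than around a coercivity lemma that does not apply to $B_\d$.
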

\begin{proof} Recall the weak formulations
$$
\begin{aligned}
\int_{\Omega} A \nabla u_{\ed}  \cdot \overline{\nabla \psi}\,dx  -    \omega^2 \int_{\Omega} \ve_\ed  u_{\ed} \cdot \overline{\psi}\,dx =& 	i \omega \int_{\Omega} f \cdot \overline{\psi}\,dx,\\
\int_{\Omega}  A \nabla u  \cdot \overline{\nabla \psi}\,dx  -    \omega^2 \int_{\Omega} \ve_3  u \cdot \overline{\psi}\,dx = &	i \omega \int_{\Omega} f \cdot \overline{\psi}\,dx,
\end{aligned}\qquad \forall \psi\in H^1_0(\O,\C)
$$
 Subtracting we get
\begin{align*}
\int_{\Omega}  A \nabla (u_{\ed}-u)  \cdot \overline{\nabla \psi}\,dx  -  \omega^2   \int_{\Omega} \ve_\d  (u_{\ed}-u) \cdot \overline{\psi}\,dx=  \omega^2  \int_{\O_\d^i}  (\ve_\d - \ve_3)u  \cdot \overline{\psi}\,dx
\end{align*}
Substitute $\psi = u_\d - u$
\begin{align}\label{EQ622}
\int_{\Omega}  A \nabla (u_{\ed}-u) \cdot \overline{\nabla (u_{\ed}-u)} \,dx  -    \omega^2 \int_{\Omega} \ve_\d  |u_{\ed}-u|^2 \,dx =     \omega^2\int_{\O_\d^i} (\ve_\d - \ve_3)u \cdot \overline{(u_\d - u)}\,dx.
\end{align}
Let us look at the imaginary part. The above equality yields
$$
-\ve_2 \int_{\Omega_\d^i}   |u_{\ed}-u|^2 \,dx = (\ve_1-\ve_3) \d^2\int_{\O_\d^i}  \Im (u \cdot \overline{(u_\d - u)}) \,dx + \ve_2\int_{\O_\d^i}  \Re (u \cdot \overline{(u_\d - u)}) \,dx.
$$
So, we have
$$
\int_{\Omega_\d^i}   |u_{\ed}-u|^2 \,dx \leq   (1+ C \d^2) \|u\|_{L^2(\O_\d^i,\C)} \|u_\d - u\|_{L^2(\O_\d^i,\C)} 
$$
where $C$ does not depend on $\d$.
Then, from \eqref{Est-u} we get
\begin{equation}\label{EQ328}
  \|u_\d - u\|_{L^2(\O_\d^i),\C} \leq  C   \|u\|_{L^2(\O_\d^i,\C)} \leq C \d^{3/2} \|u\|_{H^2(\O^+,\C)} \leq C \d^{3/2} \|f\|_{L^2(\O,\C)}.  
\end{equation} 
This estimate together with \eqref{EQ29}$_1$ leads  $ \|u_\d - u\|_{L^2(\O_\d,\C)} \leq C  \d  \|f\|_{L^2(\Omega,\C)}$. This estimate will be improved below.\\[1mm]
Now, let us look at the real part. We have
\begin{align*}
&\int_{\Omega}  A\nabla (u_{\ed}-u) \cdot \overline{\nabla (u_{\ed}-u)} \,dx \\ &= -\omega^2 {\ve_2\over \d^2}   \int_{\O_\d^i} \Im (u \cdot \overline{(u_\d - u)}) \,dx +    \omega^2 \int_{\Omega} \Re(\ve_\d)  |u_{\ed}-u|^2 \,dx   +  \omega^2 \int_{\O_\d^i} (\ve_1 - \ve_3) \Re (u \cdot \overline{(u_\d - u)}) \,dx    .
\end{align*} 
Then, the above estimate of $\|u_\d - u\|_{L^2(\O_\d)} $ together with \eqref{EQ328}-\eqref{Est-u}$_2$ give
\begin{align*}
\alpha\int_{\Omega}  |\nabla (u_{\d} - u)|^2 \,dx  &\leq   \omega^2 \int_{\Omega} \Re(\ve_\d)  |u_{\d} - u|^2 \,dx +  \omega^2 \left( { \ve_2 \over \d^2  } + |\ve_1 - \ve_3| \right)   \| u \|_{L^2(\O_\d^i)}  \|u_\d - u\|_{L^2(\O_\d^i)}  
     \\
&\leq C  \d^2  \|f\|^2_{L^2(\Omega,\C)} + \omega^2 \left( { \ve_2 \over \d^2  } + C |\ve_1 - \ve_3| \right) \d^3  \|f\|_{L^2(\O,\C)}^2  \leq C \d \|f\|_{L^2(\O,\C)}^2
\end{align*} 
where $C$ is independent of $\d$.  This proves  \eqref{Est:u_d-u}$_2$. Now, \eqref{EQ328}-\eqref{Est:u_d-u}$_2$ together with \eqref{EQ29}$_1$ yield \eqref{Est:u_d-u}$_1$. 
\end{proof}

\section{Asymptotic behaviour of the the sequence $\ds\big\{ u_\d-u\big\}_\d$.}\label{S7}

Set
$$v_\d=u_\d-u.$$
This function belongs to $H^1_0(\CO,\C)$ and is the solution to
\begin{align*}
\int_{\O} A \nabla v_\d  \cdot \overline{\nabla \psi}\,dx  -    \omega^2  \int_{\O} \ve_\d  v_\d \cdot \overline{\psi}\,dx=   \omega^2  \int_{\O_\d^i}  (\ve_\d - \ve_3)u  \cdot \overline{\psi}\,dx,\qquad \forall \psi\in H^1_0(\O,\C).
\end{align*}

\subsection{The unfolding operator ${\cal T}^\#_{\d}$}
We use the method described in \cite[Subsection 13.7.2]{book}. Denote
	$$
	{\cal Y}\doteq\big( 0, 1 \big)^2 \times \R\quad \hbox{and}\quad Y \doteq (0,1)^2 \times (-1/2,1/2)
	$$
and $x'=(x_1,x_2)$.
	\begin{definition}
		For $\varphi$ Lebesgue-measurable  on $\CO \times \R$, the unfolding operator $\cT^\#_{\d}$ is defined by
		$$
		{\cal T}^\#_{\d}(\varphi)(x',z)=
	\begin{cases}
\begin{aligned}		  
		  &\varphi \Big(\d \Big[\frac{x'}{\d}\Big]_{Y'} + \d z', \d z_3\Big)  &&\text{ for a.e.  } (x',z) \in  \widehat{\CO}_\ed \times {\cal Y}\\
&		  0  &&\text{ for a.e.  } (x',z) \in  \Lambda_\d \times {\cal Y}.
		  \end{aligned}		
		\end{cases}
		$$ 
\end{definition} 
\medskip

\begin{proposition}[Properties of the operator ${\cal T}^\#_{\d}$] \leavevmode 
	\label{unfpr}
	\begin{enumerate}
		\item For any $\varphi \in L^1 (\CO \times \R)$,
		\begin{equation*}
		\int_{\CO \times {\cal Y}} {\cal T}^\#_{\d} (\varphi)(x',z) dx' dz= \frac{1}{\d}  \int_{ \CO \times \R} \varphi \, dx - \frac{1}{\d}  \int_{ \Lambda_\ed \times \R} \varphi \, dx =  \frac{1}{\d}  \int_{ \widehat{\CO}_\d \times \R} \varphi \, dx.
		\end{equation*} 
		
		\item For any $\varphi \in L^2 (\CO \times \R)$,
		\begin{equation*}
		\|{\cal T}^\#_{\d}(\varphi)\|_{L^{2}(\CO \times {\cal Y})} \leq \frac{1}{\sqrt \d }  \|\varphi\|_{L^{2}(\CO \times \R)}.
		\end{equation*}
		
		\item Let $\varphi \in H^{1}(\CO \times \R)$, then 
		\begin{equation*}
		\d^{-1} \nabla_{z}({\cal T}^\#_{\d}(\varphi)) =  {\cal T}^\#_{\d}(\nabla \varphi) \quad \text{ a.e. in } \widehat{\CO}_\d \times {\cal Y}.
		\end{equation*}
	\end{enumerate}
\end{proposition}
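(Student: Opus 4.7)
The plan is to reduce everything to a single cell-wise change of variables on $\widehat{\CO}_\d \times \Y$; once (1) is established, both the $L^2$ estimate in (2) and the gradient identity in (3) follow almost immediately from it together with the pointwise definition of $\cT^\#_\d$.

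For (1), I would partition $\widehat{\CO}_\d$ (up to a null set of cell boundaries) into the squares $\d(\xi + Y')$, $\xi \in \Xi_\d$, with $Y' = (0,1)^2$. On such a square the definition of $\cT^\#_\d(\varphi)(x',z)$ is independent of $x'$ and depends only on $\xi = [x'/\d]_{Y'}$, so the area factor $\d^2$ can be pulled out and
$$\int_{\CO \times \Y} \cT^\#_\d(\varphi)(x',z)\, dx'\, dz \;=\; \sum_{\xi \in \Xi_\d} \d^2 \int_{\Y} \varphi(\d\xi + \d z',\, \d z_3)\, dz'\, dz_3.$$
Applying the change of variable $y' = \d\xi + \d z'$, $y_3 = \d z_3$ inside each summand introduces the Jacobian $\d^{-3}$ and turns $\Y$ into $\d(\xi+Y') \times \R$; reassembling the cells produces $\d^{-1} \int_{\widehat{\CO}_\d \times \R} \varphi\, dy$. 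Using $\widehat{\CO}_\d = \CO \setminus \Lambda_\d$ then gives the two equalities of (1).

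For (2), I would apply (1) to $|\varphi|^2 \in L^1(\CO \times \R)$. Because $\cT^\#_\d$ is a pointwise evaluation, $|\cT^\#_\d(\varphi)|^2 = \cT^\#_\d(|\varphi|^2)$, whence
$$\|\cT^\#_\d(\varphi)\|_{L^2(\CO \times \Y)}^2 \;=\; \d^{-1}\|\varphi\|_{L^2(\widehat{\CO}_\d \times \R)}^2 \;\leq\; \d^{-1}\|\varphi\|_{L^2(\CO \times \R)}^2,$$
and taking square roots yields the stated bound.

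For (3), the identity is purely a chain-rule computation on each cell. On $\d(\xi+Y') \times \Y$ one has $\cT^\#_\d(\varphi)(x',z) = \varphi(\d\xi + \d z', \d z_3)$, so differentiating in $z_i$ produces a factor of $\d$ times the corresponding partial derivative of $\varphi$ evaluated at the same point, which by definition equals $\d\, \cT^\#_\d(\p_{y_i}\varphi)$; multiplying by $\d^{-1}$ gives the claimed formula a.e. on $\widehat{\CO}_\d \times \Y$. The only subtlety in the whole proposition is keeping track of the null set of cell boundaries in the partition of $\widehat{\CO}_\d$, and I do not foresee any genuine obstacle: the proof is essentially careful bookkeeping of the unfolding change of variables.
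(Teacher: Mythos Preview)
Your proposal is correct and follows the standard cell-wise change-of-variables argument for unfolding operators; this is precisely the approach in the references \cite{cior2} and \cite[Subsection 13.7.2]{book} that the paper defers to, since the paper itself omits the proof entirely. There is nothing to compare: you have written out what the cited sources do.
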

\noindent The proofs are omitted here as it can be proved following the similar lines of arguments in \cite{cior2} and \cite[Subsection 13.7.2]{book}.\\[1mm]
Now, estimates \eqref{Est:u_d-u} yield 
\begin{equation}\label{EQ71}
\begin{aligned}
&\|\cT^\#_\d (v_\d) \|_{L^2(\CO \times {\cal Y})} \leq C\|f\|_{L^2(\CO)},\\
&\|\cT^\#_\d (v_\d) \|_{L^2(\CO \times Y)} \leq C\d\|f\|_{L^2(\CO)},\\
\quad \text{ and }\quad &\big\|\nabla_z\big(\cT^\#_\d( v_\d)\big)\big\|_{L^2(\CO \times {\cal Y})} \leq C\d\|f\|_{L^2(\CO)}.
\end{aligned}
\end{equation}
Denote $\H^1( \Y)$ the closure of $H^1_{per}(\Y)\doteq \big\{\Phi\in H^1(\Y)\;|\;  \Phi \; \hbox{is $\Ge_1$ and $\Ge_2$ periodic}\big\}$ for the norm
$$\|v\|_\H\doteq \sqrt{\int_Y|v|^2\,dz+\int_\Y|\nabla_z v|^2\,dz},\qquad v\in H^1(\Y).$$
Remind that for every $\zeta>1/2$ and every $\Phi\in H^1(\Y)$ one has 
$$\|\Phi\|^2_{L^2((0,1)^2\times(-\zeta,\zeta))}\leq 4\zeta\|\Phi\|^2_{L^2(Y)}+\zeta^2\|\nabla_z\Phi\|^2_{L^2((0,1)^2\times(-\zeta,\zeta)}.$$
As a consequence, we get for every $\Phi\in \H(\Y)$ 
$$\forall \zeta >{1\over 2},\qquad \|\Phi\|_{H^1((0,1)^2\times (-\zeta,\zeta))}\leq 2\zeta^2 \|\Phi\|_\H.$$ 
From the estimates \eqref{EQ71}, there exists a subsequence of $\{\d\}$, still denoted $\{\d\}$ and $v\in L^2(\CO ;  \H(\Y))$ such that
\begin{equation}\label{EQ72}
\begin{aligned}
\cT^\#_\d(v_\d)&\rightharpoonup 0\quad \hbox{weakly in } L^2( \CO \times \Y), \\
{1\over \d}\cT^\#_\d(v_\d)&\rightharpoonup v\quad \hbox{weakly in } L^2(\CO; H^1_{loc}(\Y)), \\
 \forall \zeta>{1\over 2},\qquad{1\over \d}\cT^\#_\d(v_\d)&\rightharpoonup v\quad \hbox{weakly in } L^2\big(\CO; H^1((0,1)^2\times (-\zeta,\zeta))\big),\\
{1\over \d}\nabla_z\big(\cT^\#_\d(v_\d)\big)&\rightharpoonup \nabla_z v\quad \hbox{weakly in } L^2(\CO \times \Y)^3.
\end{aligned}
\end{equation} 
\begin{lemma}\label{lem71}
We have
$${1\over \d}\cT^\#_\d(u)\longrightarrow u_1 \;\hbox{ strongly in } L^2(\CO \times Y)$$ where  $\ds u_1= z_3\frac{\p u}{\p x_3}_{\left|\Gamma\right.} $ a.e. in $\CO \times (0,1)^2 \times (0,1/2)$ and $u_1=0$ a.e. in $\CO \times (0,1)^2 \times (-1/2,0)$.
\end{lemma}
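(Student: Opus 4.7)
My plan is to split the reference cell $Y=(0,1)^2\times(-1/2,1/2)$ along $z_3=0$ and handle the two halves separately, using that $u=0$ a.e.\ in $\O^-$ and that $u\in H^1_0(\O^+,\C)\cap H^2(\O^+,\C)$ with $u|_\Gamma=0$.

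For the lower half $z_3\in(-1/2,0)$, I would check that $\frac{1}{\d}\cT^\#_\d(u)\equiv 0$ on $\CO\times(0,1)^2\times(-1/2,0)$. Indeed, on $\widehat{\CO}_\d$ the unfolded argument is $(\tilde x',\d z_3)$ with $\tilde x'=\d[x'/\d]_{Y'}+\d z'\in\CO$ and $\d z_3\in(-\d/2,0)\subset(-L,0)$, hence in $\O^-$ where $u=0$ a.e.; on $\Lambda_\d$ the unfolding vanishes by definition. This exactly matches $u_1=0$ on this region.

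For the upper half $z_3\in(0,1/2)$, I would Taylor-expand about $\Gamma$. Set $g(\tilde x'):=\frac{\p u}{\p x_3}(\tilde x',0)\in L^2(\CO)$ (trace on $\Gamma$). Since $u\in H^2(\O^+,\C)$ and $u|_\Gamma=0$, for a.e.\ $\tilde x'$ and $y_3\in(0,\d/2)$,
$$
u(\tilde x',y_3)=y_3\,g(\tilde x')+r(\tilde x',y_3),\qquad r(\tilde x',y_3):=\int_0^{y_3}\Big(\frac{\p u}{\p x_3}(\tilde x',t)-g(\tilde x')\Big)\,dt,
$$
with the Cauchy--Schwarz/Taylor bound $|r(\tilde x',\d z_3)|^2\leq C\d^3\int_0^{\d/2}|\p_{x_3}^2 u(\tilde x',s)|^2\,ds$. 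Substituting $y_3=\d z_3$ gives
$$
\frac{1}{\d}\cT^\#_\d(u)(x',z)=z_3\,g(\tilde x')+\frac{1}{\d}r(\tilde x',\d z_3)\quad\hbox{on}\ \widehat{\CO}_\d\times(0,1)^2\times(0,1/2).
$$
Using the identity $\int_{\widehat{\CO}_\d\times(0,1)^2}F(\tilde x')\,dx'dz'=\int_{\widehat{\CO}_\d}F(y')\,dy'$ (consequence of the change of variables $y'=\d[x'/\d]_{Y'}+\d z'$), the remainder contributes at most $C\d\,\|\p_{x_3}^2 u\|^2_{L^2(\CO\times(0,\d/2))}$, which vanishes as $\d\to 0$ by absolute continuity of the Lebesgue integral since $\p_{x_3}^2 u\in L^2(\O^+,\C)$. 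For the main term, the standard strong-convergence property of the planar periodic unfolding operator applied to $g\in L^2(\CO)$ yields $g(\tilde x')\to g(x')$ strongly in $L^2(\CO\times(0,1)^2)$, and multiplication by the bounded factor $z_3$ produces $z_3\,g(\tilde x')\to z_3\,g(x')=u_1$ strongly. Finally, on $\Lambda_\d\times(0,1)^2\times(0,1/2)$ the unfolding is zero and $\|u_1\|^2_{L^2(\Lambda_\d\times(0,1)^2\times(0,1/2))}\leq C\int_{\Lambda_\d}|g|^2\,dx'\to 0$ since $|\Lambda_\d|\to 0$ and $g\in L^2(\CO)$.

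The main technical step is the $H^2$-based Taylor-remainder estimate: one has to combine the integral form of the remainder in the $x_3$-direction with the unfolding change of variables in $(x',z')$, and then invoke absolute continuity of the Lebesgue integral over the shrinking slab $\CO\times(0,\d/2)$ to drive $\|\p_{x_3}^2 u\|_{L^2(\CO\times(0,\d/2))}$ to zero. Once this is in place, the remaining pieces follow from standard strong-convergence properties of the periodic unfolding operator applied to the trace $g$ on $\Gamma$ and from the vanishing measure of $\Lambda_\d$.
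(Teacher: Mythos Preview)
Your proof is correct and takes a more direct, self-contained route than the paper. The paper proceeds by first establishing uniform bounds for $\cT^\#_\d(\d^{-1}u)$ in $L^2(\CO;H^2((0,1)^2\times(0,1/2)))$ (using the $H^2$-regularity of $u$ and the estimates \eqref{Est-u}), extracts a weakly convergent subsequence, shows that the second $z$-derivatives converge strongly to $0$, and then identifies the limit by invoking \cite[Lemma~13.24(iii)]{book} to obtain the strong convergence $\cT^\#_\d(\partial u/\partial x_3)\to(\partial u/\partial x_3)_{|\Gamma}$; strong convergence of $\cT^\#_\d(\d^{-1}u)$ itself then follows from this together with the vanishing trace at $z_3=0$ via a Poincar\'e argument. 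You instead write down the one-variable Taylor expansion of $u(\tilde x',\cdot)$ about $x_3=0$ with integral remainder, bound the remainder by $C\d\,\|\partial_{x_3}^2 u\|^2_{L^2(\CO\times(0,\d/2))}$, and use the elementary strong convergence of the planar unfolding of $g=\partial_{x_3}u_{|\Gamma}\in L^2(\CO)$ for the leading term. Your approach avoids any subsequence extraction and the appeal to the book lemma, at the cost of carrying out the (straightforward) remainder estimate by hand; the paper's approach is more modular and would generalize more readily if only $H^1$ regularity of $\partial_{x_3}u$ were available. Both arguments exploit the same structural facts ($u=0$ on $\O^-$, $u_{|\Gamma}=0$, $u\in H^2(\O^+,\C)$), so the difference is one of packaging rather than substance. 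A minor remark: your invocation of absolute continuity for the remainder is not needed, since the prefactor $\d$ already drives it to zero given the bounded $H^2$-norm.
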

\begin{proof}
From \eqref{Est-u} and Proposition \ref{unfpr}, one has
\begin{align*}
&\|\cT^\#_\d \big( \d^{-1} u \big) \|_{L^2(\O \times (0,1)^2 \times (0, 1/2))} \leq \d^{-1/2} \|\big( \d^{-1} u \big) \|_{L^2(\O_\d)} \leq C \|f\|_{L^2(\O,\C)},\\
&\left \| \nabla_z \cT^\#_\d\big( \d^{-1} u \big)\right\|_{L^2(\O \times (0,1)^2 \times (0, 1/2))}  = \left \|\cT^\#_\d\left( \nabla u\right) \right\|_{L^2(\O \times (0,1)^2 \times (0, 1/2))} \leq \d^{-1/2} \left\|\nabla u \right \|_{L^2(\O_\d)} \leq C \|f\|_{L^2(\O,\C)}.
\end{align*}
One also has ($(i,j)\in\{1,2,3\}^2$)
\begin{align*}
\Big\|{\partial^2 \over \partial {z_i}\partial{z_j}}\cT^\#_\d\big( \d^{-1} u \big)\Big\|_{L^2(\O \times (0,1)^2 \times (0, 1/2))} =\d\Big\|\cT^\#_\d\Big( {\partial^2 u\over \partial{x_i}\partial{x_j}} \Big)\Big\|_{L^2(\O \times (0,1)^2 \times (0, 1/2))} &\leq \d^{1/2} \left\|u \right \|_{H^2(\O,\C)} \\&\leq C\d^{1/2}  \|f\|_{L^2(\O,\C)}.
\end{align*}
Thus, there exists $u_1 \in L^2(\CO; H^2((0,1)^2 \times (0, 1/2)))$ such that for a subsequence 
\begin{align*}
&\cT^\#_\d \big(\d^{-1} u\big)\rightharpoonup u_1 \;\hbox{ weakly in } L^2\big(\O ; H^2((0,1)^2 \times (0, 1/2))\big)\footnotemark\\
&{\partial^2 \over \partial {z_i}\partial{z_j}}\cT^\#_\d\big( \d^{-1} u \big) \longrightarrow 0\;\;  \hbox{ strongly in } L^2(\O \times (0,1)^2 \times (0, 1/2)).
\footnotetext{ Try to prove that weak convergence can be replaced by  strong convergence. It is better, if we want  to prove the convergence of the energy.}
\end{align*}
Since $u(x',0)=0$, we have $\ds{ \p u \over \p x_i}(x',0) =0$ for $i=1,2$ and $\cT^\#_\d \big(\d^{-1} u\big)=0$ a.e. on $ \CO \times (0,1)^2\times\{0\}$ and due to the above convergences, one has
$$u_1(x',z)=z_3 u_{1,3}(x')\quad \hbox{for a.e. } (x',z)\in \CO \times (0,1)^2 \times (0, 1/2).$$
By  \cite[Lemma 13.24(iii)]{book}, we have for any $\Phi \in H^1(\Omega^+)$
$$\cT^\#_\d( \Phi ) \longrightarrow \Phi|_{\Gamma}\,\, \hbox{ strongly in } \, L^2(\CO \times (0,1)^2 \times (0, 1/2)).$$   It also holds for  $\Phi = A$. Thus
$$
\frac{\p }{\p z_3} \cT^\#_\d\big( \d^{-1} u \big)  =\cT^\#_\d\left( \frac{\p u}{\p x_3}\right)  \longrightarrow \frac{\p u}{\p x_3}_{\left|\Gamma\right.} \hbox{ strongly in } L^2(\CO \times (0,1)^2 \times (0, 1/2)).
$$ Hence $\ds u_1= z_3\frac{\p u}{\p x_3}_{\left|\Gamma\right.} $ a.e. in $\CO \times (0,1)^2 \times (0,1/2)$ and $u_1=0$ a.e. in $\CO \times (0,1)^2 \times (-1/2,0)$.
\end{proof}
Now, we will identify $v$. Let us consider the test function $\ds \Psi^\d_\zeta (x)=\psi(x') \Psi_{\zeta}\Big(\Big\{\frac{x'}{\d}\Big\}, {x_3 \over \d}\Big)$ where $\psi \in C_0^\infty(\CO),\; \Psi_{\zeta}\in H^1_{per,\Ge_1,\Ge_2}\big(\Y\big)$, satisfying  $\Psi_{\zeta}(\cdot, z_3)=0$ for all $|z_3| >\zeta>1$.  \\
 If $\d$ is small enough, one has $\d \zeta < L$, so  $\psi^\d$ is an admissible test function. Then
\begin{align*}
\cT^\#_\ed( \Psi_{\zeta}^\d )&\to \psi\,\Psi_{\zeta} \ \ \text{strongly in } L^2(\CO; \H(\Y)),\\
\d \cT^\#_\ed( \nabla  \Psi_{\zeta}^\d ) &\to \psi\nabla_z \Psi_{\zeta}\ \ \text{strongly in } L^2(\CO \times \Y).
\end{align*}
Now, let us consider the weak form with the test function $\psi^\d$ 

\begin{align}\label{Eq:UFD}
&\d \int_{\CO \times \Y} \cT^\#_\d A \cT^\#_{\d}\nabla v_{\ed}  \cdot \cT^\#_\d \overline{\nabla \psi_\d}\, dx' dz -  \omega^2 \d  \int_{\CO \times Y_0} \cT^\#_\d(\ve_\ed)  \cT^\#_\d v_{\ed} \cdot \cT^\#_\d \overline{\psi_\d} \,dx'dz  \notag \\&~~~~~~~~~- \omega^2 \d  \int_{\CO \times \Y \setminus Y_0}  \ve_3  \cT^\#_\d v_{\ed} \cdot \cT^\#_\d \overline{\psi_\d} \,dx'dz=    \omega^2 \d  \int_{\CO \times \Y} \cT^\#_\d(\ve_\d -\ve_3) \cT(u)  \cT^\#_\d (\overline{\psi_\d})\,dx'dz.
\end{align}
Due to convergences \eqref{EQ72}, one has
\begin{align*}
& \d \int_{\CO \times \Y} \cT^\#_\d A \cT^\#_\d \nabla v_{\ed}  \cdot \cT^\#_\d \overline{\nabla \psi^\d}\,dx' dz -   \d \omega^2   \int_{\CO \times Y_0} \cT^\#_\d (\ve_\ed  v_{\ed}) \cdot \cT^\#_\d \overline{\psi^\d}\,dx' dz -  \d \omega^2 \int_{\CO \times (\Y \setminus Y_0)}  \ve_3  \cT^\#_\d v_{\ed} \cdot \cT^\#_\d \overline{\psi_\d} \,dx'dz\\
& ~~~~~~=\int_{\CO \times \Y}  \cT^\#_\d A \nabla_z \cT^\#_\d(\d^{-1} v_{\ed}) \cdot \cT^\#_\d ( \d \overline{\nabla \psi^\d})\,dx'dz -     \omega^2 \int_{\CO \times Y_0} \cT^\#_\d (\d \ve_\ed  v_{\ed}) \cdot \cT^\#_\d \overline{\psi^\d}\,dx'dz \\&~~~~~~~~~~~~~~~~~~~~~~~~~~~~~~~~~~~~~~~~~~~~~~~~~~~~~~~~~~~~~~~~~~~ -  \omega^2 \d  \int_{\CO \times (\Y \setminus Y_0)}  \ve_3  \cT^\#_\d v_{\ed} \cdot \cT^\#_\d \overline{\psi_\d} \,dx'dz\\
& ~~~~~~\longrightarrow \int_{\CO \times \Y} {   A(x',0) }\nabla_z v  \cdot \overline{\nabla_z \psi} \,dx'dz  -   i \omega^2 \ve_2    \int_{\CO \times Y_0}  v \cdot \overline{\psi}\,dx'dz.
\end{align*}
Lemma \ref{lem71} gives
$$\cT^\#_\d (\d^{-1}  u) \rightharpoonup d u_3 \,\,\text{ and }\cT^\#_\d (\d \ve_\ed  u) \rightharpoonup i \ve_2 u_1 ~~\text{ weakly in } L^2(\CO \times Y).$$
Moreover\begin{align*}
    \d \int_{\CO \times Y_0} \cT^\#_\d(\ve_\d -\ve_3) \cT(u)  \cT^\#_\d (\overline{\psi_\d})\,dx'dz &= 
    \int_{\CO \times Y_0} \big( \cT^\#_\d( \d \ve_\d u) -\cT( \d \ve_3 u) \big)  \cT^\#_\d (\overline{\psi_\d})\,dx'dz\\
 &\to i \ve_2   \int_{\CO \times Y_0}  d u_3 \psi \,dz.
\end{align*}
Thus, passing to the limit ($\d\to 0$) gives
\begin{align}
\int_{\CO \times \Y}  \big( {   A(x',0)} \nabla_z v  \cdot \overline{\nabla_z \Psi_\zeta}\big) \overline{\psi}\,dx'dz  -   i \omega^2\ve_2    \int_{\CO \times Y_0}  \big( v \cdot  \overline{\Psi}_\zeta \big)\overline{\psi} \,dx'dz &= i \omega^2 \ve_2   \int_{\CO \times Y_0} d u_3  \overline{\Psi}_\zeta \overline{\psi}\,dx'dz
\end{align}
for all $\psi \in C_0^\infty(\CO),\; \Psi_{\zeta}\in H^1_{per}\big(\Y\big)$, satisfying   $\Psi_{\zeta}(\cdot, z_3)=0$ , $ \forall\, |z_3|>\zeta>1$.

\begin{align}
\int_{\CO} \left( \int_\Y   {   A(x',0)} \nabla_z v  \cdot \overline{\nabla_z \Psi_\zeta} dz  -   i \omega^2 \ve_2    \int_{Y_0}  v \cdot \overline{\Psi}_\zeta  dz - i \omega^2 \ve_2   \int_{Y_0} d u_3 \overline{\Psi_\zeta} dz \right) \overline{\psi}\,dx'=0
\end{align}
for all $\psi \in C_0^\infty(\CO)$. Hence,

\begin{align}
\int_\Y  {   A(x',0)} \nabla_z v  \cdot \overline{\nabla_z \Psi_\zeta} dz  -   i \omega^2 \ve_2    \int_{Y_0}  v \cdot \overline{\Psi}_\zeta dz = i \omega^2 \ve_2   \int_{Y_0} d u_3 \overline{\Psi_\zeta} dz \quad \hbox{ a.e }\, x' \in \CO.
\end{align}
By a density argument, we finally get that $v$ satisfies
\begin{align}\label{Eq:Cell-Gard-}
\int_\Y {   A(x',0)} \nabla_z v\cdot\overline{\nabla_z\psi}\,dz - i \omega^2 \ve_2\int_{Y_0} v\,\overline{\psi}\,dz = i\ve_2\int_{Y_0} u_1\,\overline{\psi}\,dz,\qquad\forall\psi\in \H(\Y),\;\hbox{a.e. in }\; \CO.
\end{align}
Now, let $v_1$ and $v_2$ be two solutions of \eqref{Eq:Cell-Gard-}. Then, $\hat v = v_1-v_2$ satisfies
\begin{align}
\int_\Y {   A(x',0)} \nabla_z \hat v \cdot \overline{\nabla_z \hat v} \,dz - i \omega^2 \ve_2\int_{Y_0} |\hat v|^2\,dz = 0.
\end{align}
By equating the real and imaginary parts we get $\| \nabla_z \hat v\| =0$ in $\Y$ and $\|  \hat v\| =0$ in $Y_0$. Hence $\hat v=0$ in $\Y$. Thus \eqref{Eq:Cell-Gard-} admits a unique solution.\\

Let $V\in \H(\Y)$ be the solution to
$$\int_\Y {   A(x',0)} \nabla_z V(z)\cdot\overline{\nabla_z\psi}(z)\,dz - i \omega^2 \ve_2\int_{Y_0} V(z)\,\overline{\psi}(z)\,dz=i \omega^2 \ve_2\int_{Y^+_0} z_3 \overline{\psi}(z)\,dz,\qquad\forall\psi\in \H(\Y),$$ where $Y^+_0=Y_0\cap (0,1)^3$.
Then, we have
$$v(x',z)=V(z) {\partial u\over \partial x_3}(x',0)\quad \hbox{a.e.  in }  \CO \times \Y.$$

\begin{lemma}
There exists two positive constants $C$ and $c$ independent of $\zeta$ such that

\begin{align*}
\int_{(0,1)^2 \times (\zeta, \infty)} |\nabla_z V|^2 \,dz &\leq C e^{-c \zeta}, \quad \forall \zeta \geq 0,\quad \hbox{and}\quad \int_{\Y} z_3 |\nabla_z V|^2 \,dz \leq C.
\end{align*}
Moreover, $\nabla_z V\in L^1(\Y)^3$ and 	there exist two complex numbers  ${\bf V}(+\infty)$,  ${\bf V}(-\infty)$ such that as $\zeta\to+\infty$
$$
\begin{aligned}
&V(\cdot,\zeta)\longrightarrow  {\bf V}(+\infty)\quad \hbox{strongly in }\; L^2((0,1)^2),\\
&V(\cdot,-\zeta)\longrightarrow  {\bf V}(-\infty)\quad \hbox{strongly in }\; L^2((0,1)^2).
\end{aligned}
$$
\end{lemma}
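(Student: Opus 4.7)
The plan is to exploit the fact that, since $Y_0\subset Y=(0,1)^2\times(-1/2,1/2)$, both the forcing and the damping terms in the cell equation defining $V$ are supported in $|z_3|\leq 1/2$. Consequently, for every $\zeta\geq 1/2$ and every $\psi\in \H(\Y)$ whose support is contained in $(0,1)^2\times\{|z_3|>\zeta\}$, $V$ satisfies the homogeneous identity
\begin{equation*}
\int_{\Y}A(x',0)\,\nabla_z V\cdot\overline{\nabla_z\psi}\,dz=0.
\end{equation*}
Combined with the global bound $\nabla_z V\in L^2(\Y)^3$ inherited from $V\in\H(\Y)$ and with the $\Ge_1,\Ge_2$-periodicity, this is the standard Saint-Venant setting in which a finite-energy solution of an elliptic equation on a half-cylinder converges exponentially to a constant at infinity.

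To make the decay quantitative, for $\zeta\geq 1/2$ I would set
\begin{equation*}
E(\zeta)=\int_{(0,1)^2\times(\zeta,+\infty)}|\nabla_z V|^2\,dz,
\end{equation*}
denote by $c_\zeta\in\C$ the mean of $V$ over the slab $S_\zeta=(0,1)^2\times(\zeta,\zeta+1)$, and test the homogeneous identity with $\psi(z)=\eta(z_3)\bigl(V(z)-c_\zeta\bigr)$, where $\eta$ is the piecewise-affine cutoff equal to $0$ on $(-\infty,\zeta]$, equal to $1$ on $[\zeta+1,+\infty)$, and affine in between. Since $\psi$ is $\Ge_1,\Ge_2$-periodic and supported in $\{z_3>\zeta\}$, it is admissible. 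Using the ellipticity and continuity of $A$, Cauchy-Schwarz, and the Poincar\'e-Wirtinger inequality on $S_\zeta$ (which holds with a constant independent of $\zeta$ by translation invariance and by the choice of $c_\zeta$ as the mean on $S_\zeta$), the cross term coming from $\eta'$ is controlled by $C\bigl(E(\zeta)-E(\zeta+1)\bigr)$, so that
\begin{equation*}
\alpha\,E(\zeta+1)\leq C\bigl(E(\zeta)-E(\zeta+1)\bigr),
\end{equation*}
i.e.\ $E(\zeta+1)\leq r\,E(\zeta)$ with $r=C/(C+\alpha)\in(0,1)$. Iterating yields $E(\zeta)\leq Ce^{-c\zeta}$ with $c=-\log r>0$, which is the announced exponential bound.

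Granted exponential decay, the other three statements will follow routinely. The weighted bound comes from integration by parts: $\int_0^{+\infty}z_3\,(-E'(z_3))\,dz_3=\int_0^{+\infty}E(z_3)\,dz_3<\infty$, with a symmetric computation covering $z_3<0$. The $L^1$ bound on $\nabla_z V$ will come from Cauchy-Schwarz on each unit interval $[n,n+1]$, which gives $\int_n^{n+1}\|\nabla_z V(\cdot,s)\|_{L^2((0,1)^2)}\,ds\leq\bigl(E(n)-E(n+1)\bigr)^{1/2}\leq Ce^{-cn/2}$, a summable geometric series, followed by a second Cauchy-Schwarz in $(z_1,z_2)$. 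Finally, the pointwise limit at $+\infty$ will be obtained in two steps: the horizontal mean $\bar V(z_3)=\int_{(0,1)^2}V(z',z_3)\,dz'$ is Cauchy because $|\bar V(z_3')-\bar V(z_3)|\leq\int_{z_3}^{z_3'}\|\nabla_z V(\cdot,s)\|_{L^2((0,1)^2)}\,ds$ is bounded by the tail of the above summable series, defining ${\bf V}(+\infty)\in\C$; the slab-wise Poincar\'e-Wirtinger inequality then gives $\|V(\cdot,z_3)-\bar V(z_3)\|_{L^2((0,1)^2)}\leq C\,E(z_3-1)^{1/2}\to 0$, and combining the two yields $V(\cdot,\zeta)\to{\bf V}(+\infty)$ in $L^2((0,1)^2)$. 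The construction of ${\bf V}(-\infty)$ is symmetric.

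The main obstacle will be the Saint-Venant step itself: the cutoff $\eta$, the constant $c_\zeta$ and the Poincar\'e-Wirtinger constant have to be chosen so that every bound is uniform in $\zeta$. In particular, $c_\zeta$ must be taken as the mean of $V$ over the full slab $S_\zeta$ (rather than a trace on a single slice, which would carry a $\zeta$-dependent constant); once this choice is made, the rest of the argument is essentially forced.
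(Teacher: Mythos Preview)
Your argument is correct and is a self-contained Saint-Venant exponential-decay proof of exactly the type the paper invokes. The paper does not spell anything out: it simply says the proof is ``similar to that of \cite[Lemma 13.26]{book} with the test function $\phi(z_3)$ where $\phi\in C_0^\infty(1,\infty)$''. That lemma is precisely the exponential-decay result for finite-energy solutions of a homogeneous elliptic equation on a periodic half-cylinder, and the method there is the same energy/cut-off iteration you carry out. So there is no genuine difference in approach; you have written out in full what the paper outsources to the reference.

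One point worth tightening in your write-up. Your test function $\psi=\eta(z_3)(V-c_\zeta)$, with $\eta\equiv 1$ on $[\zeta+1,+\infty)$, is not compactly supported, and a priori you only know $\nabla_z V\in L^2(\Y)$, not $V-c_\zeta\in L^2$ of the half-cylinder; so membership of $\psi$ in $\H(\Y)$ is not immediate. The clean fix is to test with $\psi_N=\eta(z_3)\chi(z_3/N)(V-c_\zeta)$ for a smooth cut-off $\chi$ and let $N\to\infty$: the extra term produced by $\chi'$ is bounded by
\[
\frac{C}{N}\,\|\nabla_z V\|_{L^2((0,1)^2\times(N,2N))}\,\|V-c_\zeta\|_{L^2((0,1)^2\times(N,2N))}\;\le\;C\,\|\nabla_z V\|_{L^2((0,1)^2\times(N,2N))}\longrightarrow 0,
\]
using only $\|V-c_\zeta\|_{L^2((0,1)^2\times(N,2N))}\le CN$ (which follows from $\nabla_z V\in L^2(\Y)$). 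After this, your inequality $\alpha\,E(\zeta+1)\le C\big(E(\zeta)-E(\zeta+1)\big)$ and the iteration go through verbatim.

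The remaining three consequences (the weighted bound via $\int_0^\infty E\,dz_3<\infty$, the $L^1$ bound on $\nabla_z V$ via slab-wise Cauchy--Schwarz and a geometric sum, and the existence of ${\bf V}(\pm\infty)$ via the Cauchy property of $\zeta\mapsto V(\cdot,\zeta)$ in $L^2((0,1)^2)$ together with a slab trace/Poincar\'e--Wirtinger estimate) are all correct and standard. For the last step, your bound $\|V(\cdot,z_3)-\bar V(z_3)\|_{L^2((0,1)^2)}\le C\,E(z_3-1)^{1/2}$ is most cleanly obtained by applying Poincar\'e--Wirtinger on the slab $(0,1)^2\times(z_3-1,z_3)$ with respect to its mean $c$, then the $\zeta$-independent trace inequality on that slab, and finally the triangle inequality $\|V(\cdot,z_3)-\bar V(z_3)\|\le 2\|V(\cdot,z_3)-c\|$.
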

\begin{proof}
The proof is similar to that of \cite[Lemma 13.26]{book} with the test function $\phi(z_3)$ where $\phi \in C_0^\infty (1, \infty)$.
\end{proof}

\section{Appendix} \label{Sec:Annex}

 This section is devoted to give some explicit constants involved in the estimates which are of numerical importance. The proof of Lemma \ref{lem:elliptic} is provided below.
	\begin{proof}[Proof of Lemma \ref{lem:elliptic}] First, for every $v \in H$ such that $k_2 \|v\|_H^2 - k_3 \|v\|_L^2\leq0$ and accounting for the first condition of the lemma, 
		$$
		|a(v,v)|\geq |\Im(a(v,v))| \geq \frac{k_1 k_2}{k_3}||v||^2_H,\quad \forall v\in H\ \ \hbox{such that} \;\;  k_2 \|v\|_H^2 - k_3 \|v\|_L^2\leq 0.
		$$
		Now,  if $k_2 \|v\|_H^2 - k_3 \|v\|_L^2\ge 0$ then
		\begin{equation}\label{2}
		|a(v,v)|^2=|\Re (a(v,v))|^2+|\Im (a(v,v))|^2\ge (k_2 \|v\|_H^2 - k_3 \|v\|_L^2)^2 + k_1^2 \|v\|^4_L.
		\end{equation}
		Let us introduce the quadratic form $Q$ defined for every $(x_1,x_2)\in \R^2$ by
$$Q(x_1,x_2)\doteq (k_2 x_1 - k_3 x_2)^2 + k_1^2 x_2^2 =X^TAX,$$		
		with
		$$
		X=\begin{pmatrix}
		x_1 \\
		x_2
		\end{pmatrix},\quad 
		A=\begin{pmatrix}
		k_2^2 & -k_2k_3 \\
		-k_2k_3 & k_3^2+k_1^2
		\end{pmatrix}.
		$$
		The eigenvalues of the matrix $A$ are
		$$
		\mu^{\pm}=\frac{k_2^2+k_3^2+k_1^2\pm \sqrt{\Delta}}{2}> 0,
		$$
		where $\Delta=((k_2-k_1)^2+k_1^2)((k_2+k_1)^2+k_1^2)> 0$.
		In this case the Rayleigh quotient is bounded
		so that,
		$$
		0<\mu^-\le \frac{X^TAX}{X^TX}\le \mu^+.
		$$
		It follows that
		$$
		Q(x_1,x_2)=X^TAX\ge \mu^- (x_1^2+x_2^2)\ge \mu^-x_1^2.
		$$
		Then, using the inequality \eqref{2} and the above inequalities, the form $a(\cdot,\cdot)$ satisfies
		$$
		|a(v,v)|^2\geq Q\big(\|v\|^2_H,\|v\|^2_L\big)\geq \mu^- \|v\|^4_H,\quad \forall v\in H\ \ \hbox{such that } \;  k_2 \|v\|_H^2 - k_3 \|v\|_L^2\ge 0.
		$$
		Taking $\beta_1 = \ds \min \Big\{ 
		\frac{k_1 k_2}{k_3},\sqrt{\mu^-}  \Big\}>0$, one obtains finally
		$$
		|a(v,v)|\ge \beta_1 \|v\|_H^2,\qquad \forall v\in H,
		$$
		which implies that the sesquilinear form $a(\cdot,\cdot)$ is coercive w.r.t. $\|\cdot\|_H$.  
	\end{proof}

	\begin{remark}
The exact constant in the estimate \eqref{estB} is
 $$C'(\d, \theta)=\min \left\{\frac{ \alpha \theta }{ \tau},\frac{\sqrt{\alpha^2+(\omega^2 \tau)^2+( \omega^2 \theta)^2-\sqrt{((\alpha- \omega^2 \theta)^2+( \omega^2 \theta)^2)((\alpha+ \omega^2 \theta)^2+( \omega^2\theta)^2)}}}{2}  \right\}>0.$$
\end{remark}

\section*{Conflict of interest} The authors have not disclosed any competing interests.

%
%

\end{document}